\newtheorem{tm}{Theorem}
\newtheorem{defi}{Definition}
\newtheorem{rem}{Remark}
\newtheorem{rems}{Remarks}
\newtheorem{lm}{Lemma}
\newtheorem{ex}{Example}
\newtheorem{observation}{Observation}
\newtheorem{prop}{Proposition}
\begin{document}

\title{Descartes' rule of signs, Rolle's theorem 
and sequences of admissible pairs}
\author{Hassen Cheriha, Yousra Gati and Vladimir Petrov Kostov}
\address{Universit\'e C\^ote d'Azur, LJAD, France and 
University of Carthage, EPT - LIM, Tunisia}
\email{hassen.cheriha@gmail.com, hassan.cheriha@unice.fr}
\address{University of Carthage, EPT - LIM, Tunisia}
\email{yousra.gati@gmail.com} 
\address{Universit\'e C\^ote d'Azur, LJAD, France} 
\email{vladimir.kostov@unice.fr}
\begin{abstract}
Given a real univariate degree $d$ polynomial $P$, the numbers $pos_k$ and 
$neg_k$ of positive and negative roots of $P^{(k)}$, $k=0$, $\ldots$, $d-1$, 
must be admissible, i.e. they must satisfy certain inequalities 
resulting from Rolle's theorem and from 
Descartes' rule of signs. For $1\leq d\leq 5$, we give the answer to the 
question for which admissible $d$-tuples of pairs $(pos_k$, $neg_k)$ 
there exist polynomials $P$ with all nonvanishing coefficients 
such that for $k=0$, $\ldots$, $d-1$, 
$P^{(k)}$ has exactly $pos_k$ positive 
and $neg_k$ negative roots all of which are simple.  
%The classical Descartes' rule of signs limits the 
%number of positive roots of a real polynomial in one variable by the 
%number of sign changes in the sequence of its coefficients.  
%One can ask 
%the question which pairs of nonnegative integers $(p,n)$, chosen 
%in accordance with this rule and with some other natural conditions,     
%can be the pairs of numbers of positive and negative roots 
%of a real polynomial with 
%prescribed signs of the coefficients. The paper solves  
%this problem for degree $8$ polynomials.\\ 

{\bf Key words:} real polynomial in one variable; sign pattern; Descartes' 
rule of signs; Rolle's theorem\\ 

{\bf AMS classification:} 26C10; 30C15
\end{abstract}
\maketitle 

\section{Introduction}

We consider real univariate polynomials and the possible numbers of real  
positive and negative roots for them and for their derivatives. Without 
loss of generality we consider only monic polynomials and we limit ourselves 
to the generic case when neither of the coefficients of the polynomial is $0$, 
i.e. we consider the family of polynomials $P:=x^d+a_{d-1}x^{d-1}+\cdots +a_0$, 
$x$, $a_j\in \mathbb{R}^*$.    

Denote by $c$ and $p$ the numbers of sign changes and sign preservations in the 
sequence $(1$, $a_{d-1}$, $\ldots$, $a_0)$ and by $pos$ and $neg$ 
the numbers of positive and negative roots of $P$ counted with multiplicity. 
Descartes' rule of signs, completed by an observation made by Fourier 
(see \cite{AJS}, \cite{Ca}, \cite{Fo} and \cite{Ga}), 
states that 

\begin{equation}\label{Descartes1}
pos\leq c~~~{\rm and}~~~c-pos\in 2\mathbb{Z}~.
\end{equation}
Applying this rule to the polynomial $P(-x)$ one gets 

\begin{equation}\label{Descartes2}
neg\leq p~~~{\rm and}~~~p-neg\in 2\mathbb{Z}~.
\end{equation}
Notice that without the assumption the coefficients $a_j$ to be nonzero 
conditions (\ref{Descartes2}) do not hold true -- for the polynomial 
$x^2-1$ one has $c=1$, $p=0$ and $neg=1$. It is clear that 

\begin{equation}\label{Descartes3}
{\rm sgn}~a_0=(-1)^{pos}~.
\end{equation}

\begin{defi}
{\rm A {\em sign pattern (SP)} of length $d+1$ is a finite sequence of plus 
and/or minus 
signs. (As we consider only monic polynomials, the first sign is a $+$.) 
We say that the sequence $(1$, $a_{d-1}$, $\ldots$, $a_0)$ defines 
the sign pattern $\sigma$ if 
$\sigma =(+$, sgn$(a_{d-1})$, $\ldots$, sgn$(a_0))$. For a given sign pattern 
$\sigma$ with $c$ sign changes and $p$ sign preservations, we call the pair 
$(c,p)$ the {\em Descartes' pair} of $\sigma$ and we say that a pair 
$(pos, neg)$ is {\em admissible} for $\sigma$ if the conditions 
(\ref{Descartes1}) and (\ref{Descartes2}) are satisfied. We say that a given 
couple (sign pattern, admissible pair) ((SP, AP)) 
is {\em realizable} if there exists 
a monic polynomial whose sequence of coefficients defines the sign pattern 
$\sigma$ and which has exactly $pos$ positive and exactly $neg$ negative roots, 
all of them distinct.}
\end{defi}      

For $d=1$, $2$ and $3$, all couples (SP, AP) are realizable 
(this is easy to check). For $d=4$, there are only two cases of couples 
(SP, AP) which are not realizable (see~\cite{Gr}):

\begin{equation}\label{Grabiner}
((+,+,-,+,+), (2,0))~~~\, \, {\rm and}~~~\, \, ((+,-,-,-,+), (0,2))~.
\end{equation}
For $d=5$, there are also only two nonrealizable couples (SP, AP), 
see~\cite{AlFu}:

\begin{equation}\label{AlbouyFu}
((+,+,-,+,-,-), (3,0))~~~\, \, {\rm and}~~~\, \, ((+,-,-,-,-,+), (0,3))~.
\end{equation}
The question which such couples are realizable is completely solved for 
$d=6$ in~\cite{AlFu}, for $d=7$ in~\cite{FoKoSh} and for $d=8$ 
partially in \cite{FoKoSh} and completely in~\cite{KoCMJ}. In \cite{KoMB} 
an example of nonrealizability is given for $d=11$ and when both components 
of the AP are nonzero.

The signs of the coefficients $a_j$ define the sign patterns $\sigma _0$, 
$\sigma _1$, $\ldots$, $\sigma _{d-1}$ corresponding to 
the polynomial $P$ and to its derivatives of order $\leq d-1$ (the SP 
$\sigma _j$ is obtained from $\sigma _{j-1}$ by deleting the last component). 
We denote by 
$(c_k, p_k)$ and $(pos_k, neg_k)$ the Descartes' and admissible pairs for 
the SPs $\sigma _k$, $k=0$, $\ldots$, $d-1$. Rolle's theorem 
implies that 

\begin{equation}\label{Rolle1}
\begin{array}{lcl}
pos_{k+1}\geq pos_k-1&,&neg_{k+1}\geq neg_k-1\\ \\ {\rm and}&
pos_{k+1}+neg_{k+1}\geq pos_k+neg_k-1~.\end{array}
\end{equation}
It can happen that $P^{(k+1)}$ has more real roots than $P^{(k)}$. 
E. g. this is the case of $P=x^3+3x^2-8x+10=(x+5)((x-1)^2+1)$, because 
$P'=3x^2+6x-8$ has one positive and one negative root. It is always true 
that 

\begin{equation}\label{Rolle2}
pos_{k+1}+neg_{k+1}+3-pos_k-neg_k\in 2\mathbb{N}~.
\end{equation}   

\begin{defi}
{\rm For a given sign pattern $\sigma _0$ of length $d+1$, 
and for $k=0$, $\ldots$, $d-1$, 
suppose that the pair 
$(pos_k,neg_k)$ satisfies the conditions (\ref{Descartes1}) -- 
(\ref{Descartes3}) and (\ref{Rolle1}) -- (\ref{Rolle2}). 
Then we say that $((pos_0,neg_0)$, $\ldots$, $(pos_{d-1},neg_{d-1}))~(*)$ is a 
{\em sequence of admissible pairs (SAP)} 
(i.e. a sequence of pairs admissible for the sign pattern $\sigma _0$ 
in the sense of these conditions). We say 
that a SAP is {\em realizable} if there exists a polynomial $P$ 
the signs of whose coefficients define the SP $\sigma _0$ and such that 
for $k=0$, $\ldots$, $d-1$, the polynomial $P^{(k)}$ has exactly $pos_k$ 
positive and $neg_k$ negative roots, all of them being simple.}
\end{defi}

\begin{rem}\label{SAPdefinesSP}
{\rm The SAP $(*)$ defines the SP $\sigma _0$. This follows from condition 
(\ref{Descartes3}). Given a SAP $((pos_0$, $neg_0)$, $\ldots$, $(pos_{d-1}$, 
$neg_{d-1}))$, the corresponding SP (beginning with a $+$) equals} 

$$(~+~,~(-1)^{pos_{d-1}}~,~(-1)^{pos_{d-2}}~,~\ldots ~,~(-1)^{pos_0}~)~.$$
{\rm However, for a given SP there are, in general, 
several possible SAPs. The following example gives an idea how fast the 
number of SAPs compatible with a given SP might grow with $d$: for $d=2$ 
and for the SP $(+,+,+)$, there are 
two possible SAPs, namely, $((0,2),(0,1))$ and $((0,0),(0,1))$. For $d=3$ and 
for the SP $(+,+,+,+)$, there are three possible SAPs: 

$$((0,3),(0,2),(0,1))~,~~~ 
((0,1),(0,2),(0,1))~~~{\rm and}~~~((0,1),(0,0),(0,1))~.$$ 
For $d=4$ and for the SP 
$(+,+,+,+,+)$, this number is $7$: 

$$\begin{array}{cccc}
((0,4),(0,3),(0,2),(0,1))&,&((0,2),(0,3),(0,2),(0,1))&,\\ 
((0,2),(0,1),(0,2),(0,1))&,&((0,2),(0,1),(0,0),(0,1))&,\\ 
((0,0),(0,3),(0,2),(0,1))&,&((0,0),(0,1),(0,2),(0,1))&\\ 
{\rm and}&& ((0,0),(0,1),(0,0),(0,1))&.\end{array}$$
The next six numbers (denoted by $A(d)$), 
obtained as numbers of SAPs compatible with 
the all-pluses SP of length $d+1$, are:

$$12~,~30~,~55~,~143~,~273~,~728~.$$
They coincide with the terms of sequence A047749 
of The On-line Encyclopedia of Integer Sequences founded by N.~J.~A.~Sloane 
in 1964. To be more precise, sequence A047749 begins like this: 
$1$, $1$, $1$, $2$, $3$, $7$, $12$, $30$, $55$, $143$, $\ldots$. Its terms are 
defined as ${3m\choose m}/(2m+1)$ if $n=2m$ and as 
${3m+1\choose m+1}/(2m+1)$ if $n=2m+1$. It would be interesting to (dis)prove 
that this formula applies to all numbers $A(d)$ for $d\in \mathbb{N}$. 
We prove a weaker statement 
(see Proposition~\ref{A(d)}) which implies that the numbers $A(d)$ 
grow faster than the numbers $[d/2]+1$ of APs $(pos_0, neg_0)$ compatible with 
the all-pluses SP of length $d+1$. These APs are $(0,d-2r)$, 
$r=0$, $\ldots$, $[d/2]$ (the integer part of $d/2$).} 
\end{rem}

\begin{prop}\label{A(d)}
For $d\geq 2$ even, one has $A(d)\geq 2A(d-1)$. For $d\geq 3$ odd, one has 
$A(d)\geq 3A(d-1)/2$.
\end{prop}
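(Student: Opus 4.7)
The plan is to fix the all-pluses SP and exploit the fact that any compatible SAP has $pos_k=0$ for every $k$ (apply (\ref{Descartes1}) with $c_k=0$). A SAP thus reduces to a sequence $m=(m_0,\ldots,m_{d-1})$ with $m_k=neg_k$ satisfying $0\le m_k\le d-k$, $m_k\equiv d-k\pmod 2$, $m_{k+1}\ge m_k-1$, and $m_{d-1}=1$. I will then construct a projection $\pi$ from the set $\Sigma_d$ of such sequences to $\Sigma_{d-1}$ by forgetting the first coordinate, and show that every fiber has at least two elements when $d$ is even, and on average at least $3/2$ elements when $d$ is odd.

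First I would check that $\pi(m_0,m_1,\ldots,m_{d-1})=(m_1,\ldots,m_{d-1})$ really lies in $\Sigma_{d-1}$: after reindexing, the parity condition $m_{k+1}\equiv d-(k+1)\equiv(d-1)-k\pmod 2$ is the right one, the bound $m_{k+1}\le d-(k+1)=(d-1)-k$ is the correct one for $\Sigma_{d-1}$, the Rolle inequalities carry over verbatim, and the terminal entry $m_{d-1}=1$ gives the required boundary condition for $\Sigma_{d-1}$. Hence $A(d)=\sum_{S\in\Sigma_{d-1}}|\pi^{-1}(S)|$.

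Next I would count the preimages of a fixed $S=(\tilde m_0,\ldots,\tilde m_{d-2})\in\Sigma_{d-1}$. A preimage is determined by the new leading entry $m\in\{0,1,\ldots,d\}$, subject to $m\equiv d\pmod 2$ and $m\le\tilde m_0+1$ (Rolle). Since $\tilde m_0$ has parity opposite to $m$, the count equals $(\tilde m_0+3)/2$ when $d$ is even and $\tilde m_0/2+1$ when $d$ is odd. For even $d\ge 2$, the value $\tilde m_0$ is a non-negative odd integer and hence $\ge 1$, so each fiber has at least two elements and the inequality $A(d)\ge 2A(d-1)$ is immediate.

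For odd $d\ge 3$ (so $d-1\ge 2$), the fiber has size $1$ precisely when $\tilde m_0=0$ and size at least $2$ otherwise. Letting $B$ denote the number of $S\in\Sigma_{d-1}$ with $\tilde m_0=0$, the key step is to verify that the map $(0,\tilde m_1,\tilde m_2,\ldots,\tilde m_{d-2})\mapsto(2,\tilde m_1,\tilde m_2,\ldots,\tilde m_{d-2})$ is a well-defined injection into $\{S\in\Sigma_{d-1}:\tilde m_0=2\}$: the value $2$ lies in $[0,d-1]$, has the right (even) parity, and the Rolle condition $\tilde m_1\ge 1$ is automatic because $\tilde m_1$ is a non-negative odd integer. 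This yields $B\le A(d-1)-B$, hence $B\le A(d-1)/2$, and therefore $A(d)\ge B+2(A(d-1)-B)=2A(d-1)-B\ge\tfrac{3}{2}A(d-1)$. I expect the only real obstacle to be spotting precisely this short injection in the odd case; without it one is only guaranteed a single preimage for each small-$\tilde m_0$ fiber, and everything else is routine parity bookkeeping.
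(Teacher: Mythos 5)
Your proof is correct and is essentially the paper's argument in different packaging: both reduce the count to the same combinatorial model (sequences $(neg_0,\dots,neg_{d-1})$ for the all-pluses SP) and both amount to counting, for each tail in $\Sigma_{d-1}$, the admissible values of the prepended entry — the paper encodes this in the recursion $h_{d,m}=h_{d,m+2}+h_{d-1,m-1}$ for its counts $h_{d,m}$ of SAPs with $neg_0=m$, which is exactly your fiber computation summed differently. The only real difference is in the odd case, where the key inequality $\#\{\tilde m_0=0\}\le\#\{\tilde m_0=2\}$ is obtained in the paper as the identity $h_{d-1,0}=h_{d-1,2}$ read off from the recursion, while you prove it by an explicit injection; both yield the stated bound.
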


In what follows, 
for the sake of making things more explicit, we write down often 
the couples (SP, SAP), not just the SAPs.

\begin{ex}
{\rm Consider the couple (SP, AP) $C:=((+,+,-,+,+)$, $(0,2))$. It can be 
extended in two ways into a couple (SP, SAP):}

$$\begin{array}{cccccccccccc}
(&(+,+,-,+,+)&,&(0,2)&,&(2,1)&,&(1,1)&,&(0,1)&)&{\rm and}\\ \\ 
(&(+,+,-,+,+)&,&(0,2)&,&(0,1)&,&(1,1)&,&(0,1)&)&.\end{array}$$
{\rm Indeed, by Rolle's theorem, the derivative of a polynomial realizing 
the couple $C$ has at least one negative root. Condition (\ref{Descartes3}) 
implies that this derivative (which is of degree 3) has an even number of 
positive roots. This gives the two possibilities $(2,1)$ and $(0,1)$ 
for $(pos _1, neg_1)$. The second derivative has a positive and a negative 
root. Indeed, it is a degree 2 polynomial with positive leading and negative 
last coefficient. The realizability of the above two couples (SP, SAP) 
is justified in the proof of Theorem~\ref{maintm}.}
\end{ex}

Our first result is the following proposition:

\begin{prop}\label{mainprop1}
For any given SP of length $d+1$, $d\geq 1$, there exists a unique SAP such 
that $pos_0+neg_0=d$. This SAP is realizable. For the given SP, 
this pair $(pos_0, neg_0)$ is its Descartes' pair. 
\end{prop}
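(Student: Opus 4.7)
The plan is to split the statement into uniqueness (which simultaneously identifies $(pos_0,neg_0)$ with the Descartes' pair of $\sigma_0$) and realizability, and to handle the latter by induction on $d$.

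For uniqueness, let $(c_k,p_k)$ denote the Descartes' pair of $\sigma_k$ and take any $P$ with sign pattern $\sigma_0$ satisfying $pos_0+neg_0=d$. Conditions (\ref{Descartes1}) and (\ref{Descartes2}) give $pos_0\leq c_0$ and $neg_0\leq p_0$, and the saturation $c_0+p_0=d=pos_0+neg_0$ forces equality. Hence $P$ is hyperbolic with $d$ simple real roots; Rolle's theorem then forces each $P^{(k)}$ to have $d-k$ simple real roots, and repeating the Descartes-plus-saturation argument for $P^{(k)}$ (whose sign pattern is $\sigma_k$) yields $pos_k=c_k$ and $neg_k=p_k$. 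This shows both uniqueness of the SAP and the final clause of the proposition.

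For realizability I would induct on $d$, attaching one real root at a time. The base $d=1$ is immediate. In the inductive step, write $\sigma_0=(+,\epsilon_{d-1},\ldots,\epsilon_1,\epsilon_0)$; by the inductive hypothesis the truncated SP $(+,\epsilon_{d-1},\ldots,\epsilon_1)$ is realized by some monic $Q(x)=\sum_{j=0}^{d-1}b_jx^j$ of degree $d-1$ whose simple real roots saturate its Descartes' pair $(c',p')$. I would then set $P(x)=(x-\rho)Q(x)$ for a small real $\rho\neq 0$: the coefficient of $x^j$ is $b_{j-1}-\rho b_j$ for $1\leq j\leq d-1$ and the constant term is $-\rho b_0$. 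For $|\rho|$ small, each $b_{j-1}-\rho b_j$ inherits the sign of the nonzero $b_{j-1}$, so the top $d$ signs of $P$ reproduce those of $Q$, while the new last sign is $-{\rm sgn}(\rho)\epsilon_1$. Choosing $\rho<0$ in the sign-preservation case $\epsilon_0=\epsilon_1$ and $\rho>0$ in the sign-change case $\epsilon_0\neq\epsilon_1$ simultaneously produces the correct endpoint sign and adds a correctly signed new root, updating $(c',p')$ to $(c_0,p_0)$.

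The one point requiring care is the bookkeeping in the inductive step: matching the parity-and-inequality update from $(c',p')$ to $(c_0,p_0)$ with the sign of the new root, and verifying that the window of admissible $\rho$ (small enough to preserve the top $d$ signs and disjoint from the finite root set of $Q$) is nonempty. Both are routine once the coefficients of $(x-\rho)Q$ are written out, and the whole argument invokes nothing beyond Descartes' rule and Rolle's theorem.
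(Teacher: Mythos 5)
Your proposal is correct and follows essentially the same route as the paper: uniqueness and the identification with the Descartes' pair come from saturating the Descartes inequalities at level $0$ and propagating via Rolle's theorem, and realizability is proved by induction on $d$ by multiplying a realizing polynomial of the truncated sign pattern by a linear factor $(x-\rho)$ with $\rho$ small and of the appropriate sign (the paper writes this as $P(x)(x\pm\varepsilon)$). The coefficient bookkeeping you flag as the delicate point is exactly the computation the paper carries out, with the same conclusion about the sign of the appended constant term.
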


\begin{rems}\label{remshyp}
{\rm (1) Consider a SP of length $d+1$, $d\geq 1$, and a SAP with 
$(pos_0, neg_0)=(d-1,1)$ (resp. $(pos_0, neg_0)=(1,d-1)$). 
By Proposition~\ref{mainprop1}, this 
couple (SP, SAP) is realizable by some polynomial $P$. 
But then all other SAPs with the same 
pairs $(pos_k, neg_k)$, $k=1$, $\ldots$, $d-1$, and with 
$(pos_0, neg_0)=(d-1-2\nu ,1)$ (resp. $(pos_0, neg_0)=(1,d-1-2\nu )$), $\nu =1$, 
$\ldots$, $[(d-1)/2]$, 
%(the integer part of $(d-1)/2$) 
are also realizable with 
this SP. Indeed, by adding a small linear term $\varepsilon x$ to the 
polynomial $P$ (without changing the SP of its coefficients) 
one can obtain the condition the critical values of $P$  
to be distinct. In the case $(pos_0, neg_0)=(1,d-1)$, the constant term of 
$P$ is negative, see (\ref{Descartes3}). Hence in the family $P-v$, $v>0$ 
(defining the same SP for all values of $v$)  
one encounters polynomials with exactly one positive and exactly $d-1$, $d-3$, 
$\ldots$, $d-2[(d-1)/2]$ negative roots for suitable values of $v$. 
In the case $(pos_0, neg_0)=(d-1,1)$, the sign of the constant term equals 
$(-1)^{d-1}$ and in the family $P+(-1)^{d-1}v$ one encounters polynomials 
with exactly one negative and exactly $d-1$, $d-3$, 
$\ldots$, $d-2[(d-1)/2]$ positive roots.

(2) In the same way, if $(pos_0, neg_0)=(d,0)$ (resp. $(pos_0, neg_0)=(0,d)$), 
then this 
couple (SP, SAP) is realizable by some polynomial $P$, and all couples 
(SP, SAP) with the same SP, the same pairs $(pos_k, neg_k)$, $k=1$, $\ldots$, 
$d-1$, and with $(pos_0, neg_0)=(d-2\nu ,0)$ (resp. 
$(pos_0, neg_0)=(0,d-2\nu )$), $\nu =1$, 
$\ldots$, $[d/2]$, are also realizable.}
\end{rems}   

There are examples of couples (SP, SAP) which are not realizable:

\begin{ex}\label{exnotrealiz}
{\rm For $d=4$, the couple (SP, SAP)}  

\begin{equation}\label{exd4}
(~(+,+,-,+,+)~,~(2,0)~,~(2,1)~,~(1,1)~,~(0,1)~)\end{equation} 
{\rm is not realizable because the first of the two couples (SP, AP) 
(\ref{Grabiner}) is not realizable. Hence 
for $d=5$, the following couples (SP, SAP) are not realizable:}

\begin{equation}\label{exd45}
\begin{array}{ccccccccccccc}
(&(+,+,-,+,+,+)&,&(2,1)&,&(2,0)&,&(2,1)&,&(1,1)&,&(0,1)&)~,\\ \\ 
(&(+,+,-,+,+,+)&,&(0,1)&,&(2,0)&,&(2,1)&,&(1,1)&,&(0,1)&)~,\\ \\ 
(&(+,+,-,+,+,-)&,&(3,0)&,&(2,0)&,&(2,1)&,&(1,1)&,&(0,1)&)~,\\ \\ 
(&(+,+,-,+,+,-)&,&(1,0)&,&(2,0)&,&(2,1)&,&(1,1)&,&(0,1)&)~.\end{array}
\end{equation}
{\rm For $d=5$, the following couple (SP, SAP) is also not realizable, 
see the first of the nonrealizable couples (SP, AP) in (\ref{AlbouyFu}):}

\begin{equation}\label{exd5}
(~(+,+,-,+,-,-)~,~(3,0)~,~(3,1)~,~(2,1)~,~(1,1)~,~(0,1)~)~.
\end{equation}
\end{ex}

In what follows we reduce by half the cases to be considered 
using the following fact: 

\begin{observation}\label{obs1}
{\rm If $a$ is a root of the polynomial $P(x)$, then $-a$ is a root of $P(-x)$. 
Hence if $P(x)$ has $pos$ positive and $neg$ negative roots, then $P(-x)$ has 
$neg$ positive and $pos$ negative roots.}
\end{observation}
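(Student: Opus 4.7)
\emph{Proof plan.} The statement is essentially a tautology after the substitution $x\mapsto -x$, so the plan is to make this substitution explicit and track the signs of the roots. First I would set $Q(x):=P(-x)$ and evaluate at $-a$: one has $Q(-a)=P(-(-a))=P(a)$, which shows that $a$ is a root of $P$ if and only if $-a$ is a root of $Q$. This immediately verifies the first sentence of the observation.

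Next I would note that the involution $a\mapsto -a$ on $\mathbb{R}^*$ is sign-reversing, hence it restricts to a bijection between the positive roots of $P$ and the negative roots of $Q$, and between the negative roots of $P$ and the positive roots of $Q$. Taking cardinalities gives the second sentence. To be slightly pedantic about multiplicities, the factorization $P(x)=\prod_i(x-a_i)^{m_i}$ transforms into $P(-x)=(-1)^{\deg P}\prod_i(x-(-a_i))^{m_i}$, so $-a_i$ is a root of $Q$ with exactly the same multiplicity as $a_i$ is a root of $P$; this matters because the numbers $pos$ and $neg$ appearing in (\ref{Descartes1}) and (\ref{Descartes2}) are counted with multiplicity.

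There is no genuine obstacle to overcome. The only point worth a moment's care is the fixed point of the involution: if $0$ were a root of $P$, the bijection would not cleanly split into a ``positive part'' and a ``negative part''. But the standing convention of the paper is $a_j\in\mathbb{R}^*$, so $P(0)=a_0\neq 0$ (and hence also $Q(0)=P(0)\neq 0$), ruling this out. Thus the sign-reversing involution acts freely on the set of real roots and the counting argument above goes through without caveat.
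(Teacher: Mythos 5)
Your proof is correct and follows the same (essentially unique) route as the paper, which treats the statement as self-evident via the substitution $x\mapsto-x$; your added remarks on multiplicities and on $P(0)=a_0\neq 0$ are sound but not needed beyond what the paper already assumes.
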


\begin{rems}\label{remsobs1}
{\rm (1) Observation \ref{obs1} allows to consider for every couple 
of polynomials $(P(x)$, 
$(-1)^dP(-x))$ 
only one of them. We choose this to be the one with sgn$(a_{d-1})=+$. We say 
that the polynomials $P(x)$ and $P(-x)$ are {\em equivalent modulo the 
$\mathbb{Z}_2$-action}. 

(2) When couples (SP, AP) are studied, one can use a 
second symmetry to reduce the number of cases to be considered. This symmetry 
stems from the fact that the polynomials $P(x)$ and its reverted one 
$($sgn$(a_0))x^dP(1/x)$ have one and 
the same numbers of positive and negative roots. Up to a sign, 
the SP defined by the latter 
polynomial is the one defined by $P$, but read backward. 
In the present paper we 
cannot use reversion, because the two ends of a SP do not play the same role -- 
we differentiate w.r.t. of $x$ which makes disappear 
one by one the coefficients of the lowest 
degree monomials.}
\end{rems}  

In the present paper we prove the following theorem:

\begin{tm}\label{maintm}
(1) For $d=1$, $2$ and $3$, all couples (SP, SAP) are realizable.

(2) For $d=4$, the only couple (SP, SAP) which is not 
realizable is~(\ref{exd4}). 

(3) For $d=5$, the only couples (SP, SAP) which are not realizable are 
(\ref{exd45}) and~(\ref{exd5}).
\end{tm}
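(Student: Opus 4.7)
The plan is to combine Proposition~\ref{mainprop1} and Remarks~\ref{remshyp} with a case-by-case construction, using Observation~\ref{obs1} to halve the number of sign patterns to examine. For fixed $d\le 5$ there are only finitely many couples (SP, SAP) compatible with conditions (\ref{Descartes1})--(\ref{Rolle2}), so the entire argument reduces to a finite enumeration. The listed exceptions are forced directly by the already established results (\ref{Grabiner}) and (\ref{AlbouyFu}): the couple (\ref{exd4}) has its leading pair $(2,0)$ with $\sigma_0=(+,+,-,+,+)$, which is exactly the first forbidden couple of (\ref{Grabiner}); the couple (\ref{exd5}) has leading pair $(3,0)$ with $\sigma_0=(+,+,-,+,-,-)$, the first forbidden couple of (\ref{AlbouyFu}); and each of the four couples (\ref{exd45}) has derivative sign pattern $\sigma_1=(+,+,-,+,+)$ together with $(pos_1,neg_1)=(2,0)$, which again violates (\ref{Grabiner}). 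Thus non-realizability is immediate in all listed cases.

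For $d=1,2,3$, Proposition~\ref{mainprop1} handles the unique SAP with $pos_0+neg_0=d$, while Remarks~\ref{remshyp} handle all SAPs with $(pos_0,neg_0)\in\{(d,0),(0,d),(d-1,1),(1,d-1)\}$ and any prescribed realizable tail. A direct inspection confirms that these two inputs already exhaust the SAPs for $d\le 3$, giving part~(1). For $d=4,5$, the same inputs cover most couples, leaving only a short explicit list of SAPs still to be realized. The principal tool for these is integration: having realized, by induction on $d$, the tail $((pos_1,neg_1),\ldots,(pos_{d-1},neg_{d-1}))$ compatible with $\sigma_1$ via some degree-$(d-1)$ polynomial $Q$, one sets $P(x):=\int_0^x Q(t)\,dt+a_0$, where the sign of $a_0$ is dictated by (\ref{Descartes3}). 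As $|a_0|$ decreases from $+\infty$ to $0$ (with sign fixed), the graph of $P$ shifts vertically, and the pair $(pos_0,neg_0)$ jumps each time $-a_0$ crosses a critical value of the antiderivative. Tracking these crossings realizes every SAP whose tail is realizable, subject only to the Descartes parity constraint; a final small perturbation of the coefficients of $Q$ ensures that all critical values of $P$ are distinct so that every root of every $P^{(k)}$ is simple.

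The delicate point is that this integration step must deliver the prescribed $pos_0$ and $neg_0$ separately, not merely their sum: the positive and negative critical values of the antiderivative of $Q$ interleave in a way controlled by the shape of $Q$ on each half-line, and one must select $Q$ within its SAP-class so that sufficiently many critical values lie on each side of $0$. For the handful of stubborn couples at $d=5$ where no single $Q$ from the inductive step works, I would exploit the freedom noted in Remark~\ref{SAPdefinesSP} to replace $Q$ by a different realizer of the same SAP (with critical values redistributed), or, failing that, exhibit an \emph{ad hoc} polynomial of the form $\prod_{i=1}^d(x-r_i)+\varepsilon x+\delta$ with carefully chosen roots and perturbation parameters. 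The main obstacle is precisely the bookkeeping of positive versus negative critical values in the integration step, which is what makes the non-realizability of (\ref{exd4}), (\ref{exd45}) and (\ref{exd5}) a genuine phenomenon rather than a counting artifact, and which would require the most care to push through a complete proof.
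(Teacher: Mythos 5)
Your treatment of the non-realizability direction is correct and is exactly the paper's argument: each listed exception forces a derivative (or the polynomial itself) to realize one of the forbidden couples (\ref{Grabiner}) or (\ref{AlbouyFu}). The realizability direction, however, has a genuine gap in two places. First, your claim that Proposition~\ref{mainprop1} and Remarks~\ref{remshyp} ``already exhaust the SAPs for $d\le 3$'' is false: Remarks~\ref{remshyp} only lower $(pos_0,neg_0)$ while keeping the \emph{same} tail $((pos_1,neg_1),\ldots,(pos_{d-1},neg_{d-1}))$ as the Descartes-pair SAP of Proposition~\ref{mainprop1}, not ``any prescribed realizable tail.'' For instance $((+,+,+,+),(0,1),(0,0),(0,1))$ has tail $((0,0),(0,1))$, which differs from the tail $((0,2),(0,1))$ of the Descartes SAP $((0,3),(0,2),(0,1))$, so neither tool applies; you need something like Proposition~\ref{mainprop2} (add a large constant to an antiderivative) or an explicit polynomial, which is how the paper handles it (it simply lists realizing polynomials for all $10$ couples at $d=3$).

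Second, and more seriously, the heart of the theorem for $d=4,5$ is precisely the step you defer. Your integration scheme ($P=\int_0^x Q+a_0$ with $|a_0|$ varying) controls only $pos_0+neg_0$ through the number of critical values crossed; the split between $pos_0$ and $neg_0$ depends on which critical points are positive and which are negative and on the interleaving of the corresponding critical values, and your blanket assertion that this ``realizes every SAP whose tail is realizable, subject only to the Descartes parity constraint'' is contradicted by the very exceptions (\ref{exd4})--(\ref{exd5}), whose tails are all realizable. You correctly name this as the delicate point, but then resolve it only conditionally (``I would exploit the freedom\ldots or, failing that, exhibit an ad hoc polynomial''). The paper's proof consists exactly of discharging this obligation: it organizes the $d=5$ cases by the value of $pos_1+neg_1$ (Propositions~\ref{mainprop3}--\ref{mainprop5}), uses deformations of the degenerate polynomials $(x+1)^3(x-a)^2$ and $(x+a)^2(x-1)^3$ (Lemma~\ref{lm23}) to place critical values on prescribed sides of $0$, and exhibits a few dozen explicit quintics with numerically verified root counts for the stubborn cases. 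Without carrying out that finite but substantial construction, the proposal is a correct proof outline rather than a proof.
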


\begin{rem}
{\rm As we see, for degrees up to $5$, the questions of realizability of 
couples (SP, AP) and (SP, SAP) (or just SAP, see Remark~\ref{SAPdefinesSP}) 
have the same 
answers. The much more numerous cases of SAPs compared to couples 
(SP, AP) as $d$ grows (see Remark~\ref{SAPdefinesSP} and 
Proposition~\ref{A(d)}) 
indicate that it is not unlikely 
these answers to be different for some $d\geq 6$.}
\end{rem} 

In the proof of Theorem~\ref{maintm} we use the following proposition: 

\begin{prop}\label{mainprop2}
Suppose that the couple ($\sigma$, $U$) is realizable by a polynomial $P$, 
where 
$\sigma$ is a sign pattern of length $d+1$ and $U$ is a SAP. 
Denote by $\sigma ^*$ 
(resp. by $\sigma ^{\dagger}$) the SP of length $d+2$ obtained from 
$\sigma$ by adding a sign $+$ (resp. $-$) to its right. Then 

(1) for $d$ even, the couple ($\sigma ^*$, $((0,1),~U)$) 
(resp. ($\sigma ^{\dagger}$, $((1,0),~U)$)) is realizable.

(2) for $d$ odd, the couple ($\sigma ^*$, $((0,0),~U)$) 
(resp. ($\sigma ^{\dagger}$, $((1,1),~U)$)) is realizable.
\end{prop}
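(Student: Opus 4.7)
The plan is to obtain $Q$ as an antiderivative of $P$, using the constant of integration as a free parameter. Let $P=x^d+a_{d-1}x^{d-1}+\cdots+a_0$ realize the couple $(\sigma,U)$, and for a real parameter $c$ define the monic polynomial
$$Q(x) := x^{d+1} + \sum_{j=1}^{d} \frac{d+1}{j}\,a_{j-1}\,x^j + c,$$
so that $Q'(x) = (d+1)P(x)$. Since $(d+1)/j>0$, the coefficient of $x^j$ in $Q$ has the same sign as $a_{j-1}$ for $1\le j\le d$, hence the sign pattern of $Q$ equals $\sigma$ with $\mathrm{sgn}(c)$ appended. Moreover $Q^{(k)}=(d+1)P^{(k-1)}$ for every $k\ge 1$, so $Q^{(k)}$ has exactly the same real roots as $P^{(k-1)}$, and these roots are simple because $(\sigma,U)$ is realizable. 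Consequently the last $d$ entries of the SAP of $Q$ coincide with $U$ for every choice of $c\neq 0$.

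The remaining task is to choose $c$ so that the first pair of the SAP of $Q$ matches the prescribed one. Let $\xi_1<\cdots<\xi_N$ be the real roots of $P$, which are exactly the critical points of $Q$. The quantities $Q(\xi_i)-c=(d+1)\int_0^{\xi_i}P(t)\,dt$ depend only on $P$, so for $|c|$ sufficiently large the values $Q(0)=c$ and $Q(\xi_1),\ldots,Q(\xi_N)$ all share the sign of $c$. Take $c>0$ with $|c|$ large for the cases requiring $\sigma^*$ and $c<0$ with $|c|$ large for those requiring $\sigma^\dagger$; these sign choices are consistent with~(\ref{Descartes3}) applied to the prescribed first pair in each of the four situations. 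Since $Q$ is strictly monotonic on each interval between consecutive critical points, it follows that $Q$ has the constant sign $\mathrm{sgn}(c)$ on the whole set $[\xi_1,\xi_N]\cup\{0\}$.

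Every real root of $Q$ therefore lies in one of the two semi-infinite intervals $(-\infty,\xi_1)$ or $(\xi_N,\infty)$, on which $Q$ is again strictly monotonic because $P$ has constant sign there. On each such interval $Q$ contributes at most one real root, and exactly one precisely when $\mathrm{sgn}(c)$ differs from the sign of $Q$ at the corresponding infinity; the latter is governed by the parities of $d$ and $d+1$ combined with the fact that $P$ and $Q$ are monic. A short case analysis over the four combinations of parity of $d$ and sign of $c$ then yields precisely the first pairs $(0,1)$, $(1,0)$, $(0,0)$, $(1,1)$ claimed in the statement, and the position of the unique roots in $(0,\infty)$ or $(-\infty,0)$ is forced by $\mathrm{sgn}(Q(0))=\mathrm{sgn}(c)$ together with the monotonicity of $Q$. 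Any such root is transverse (it is not a critical point of $Q$), hence simple, so the couple is realized. The main point requiring care is this final four-way sign bookkeeping; the rest of the argument is set-up and a direct invocation of the realizability of $(\sigma,U)$.
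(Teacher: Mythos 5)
Your proof is correct and follows essentially the same route as the paper: both take an antiderivative $Q$ of $P$ (so that all higher derivatives inherit the SAP $U$ and the sign pattern $\sigma$) and shift by a constant of large absolute value, with the sign of the constant selecting $\sigma^*$ versus $\sigma^\dagger$ and the parity of $d$ determining the first admissible pair. Your version merely spells out the monic normalization and the location of the new root(s) in more detail than the paper does.
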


Another proposition which implies part of the proof of Theorem~\ref{maintm} 
reads:

\begin{prop}\label{mainprop3}
For $d=5$, consider the SAPs in which $(pos_2, neg_2)=(0,1)$ or $(1,0)$. 
All these SAPs are realizable (with the SPs which they define, see 
Remark~\ref{SAPdefinesSP}).
\end{prop}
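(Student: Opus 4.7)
The plan is to realize each admissible SAP by a two-stage antidifferentiation starting from the second derivative. Fix a SAP $V=((pos_0,neg_0),(pos_1,neg_1),T)$ with $(pos_2,neg_2)\in\{(0,1),(1,0)\}$, whose tail $T=((pos_2,neg_2),(pos_3,neg_3),(pos_4,neg_4))$ is itself a SAP for degree $3$ (Rolle's theorem and the convention $a_4>0$ force $(pos_4,neg_4)=(0,1)$ and $(pos_3,neg_3)\in\{(0,0),(1,1),(0,2)\}$). By Theorem~\ref{maintm}(1), $T$ is realized by some monic cubic; rescale it so that its leading coefficient equals $20$ and call the result $Q$. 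Thus $Q$ has exactly one real root $\zeta$, which is negative if $(pos_2,neg_2)=(0,1)$ and positive if $(pos_2,neg_2)=(1,0)$.

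Define $P'_\lambda(x):=\int_0^x Q(t)\,dt+\lambda$, with $\lambda=a_1$ free. The critical points of $P'_\lambda$ are the roots of $Q$, so $P'_\lambda$ has a unique global minimum at $\zeta$, of value $\lambda+I$ where $I:=\int_0^\zeta Q$ is nonzero and of sign opposite to $\zeta$. As $\lambda$ varies along $\mathbb{R}$, the real-root pattern of $P'_\lambda$ changes at the two special values $\lambda=-I$ (double root at $\zeta$) and $\lambda=0$ (root at $0$), giving three open regimes. A short direct check shows that each admissible value of $(pos_1,neg_1)$ compatible with $(pos_2,neg_2)$---namely $(0,0)$, $(1,1)$, and $(0,2)$ when $\zeta<0$ or $(2,0)$ when $\zeta>0$---is realized in one of these regimes, and that the sign of $\lambda$ in that regime agrees with $(-1)^{pos_1}$, as required by Descartes' rule.

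Now set $P_\mu(x):=\int_0^x P'_\lambda(t)\,dt+\mu$, with $\mu=a_0$ free. In each of the three subcases for $(pos_1,neg_1)$ the critical structure of $P_\mu$ is dictated by $P'_\lambda$: if $P'_\lambda$ has no real roots then $P_\mu$ is strictly monotone with a single real root whose sign is that of $-\mu$; otherwise $P_\mu$ has a local maximum and a local minimum at the two roots of $P'_\lambda$, and the real-root configuration of $P_\mu$ changes at the two values of $\mu$ where the critical values of $P_\mu$ vanish. An analogous case-by-case analysis exhibits each admissible $(pos_0,neg_0)$ in an open $\mu$-region, and one verifies $\mathrm{sgn}(\mu)=(-1)^{pos_0}$ in each. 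Genericity of $(\lambda,\mu)$ inside the chosen open regions ensures that all real roots of $P_\mu$, $P'_\lambda$ and $Q$ are simple.

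The main obstacle is organisational rather than conceptual: enumeration yields six parallel subcases (three $(pos_1,neg_1)$ values for each of the two choices of $(pos_2,neg_2)$), and in the subcases where $P_\mu$ has three real roots---so $(pos_0,neg_0)$ can take values $(2,1),(1,2),(3,0)$ or $(0,3)$---a single real root of $P_\mu$ passes through $0$ as $\mu$ changes sign, swapping two admissible values of $(pos_0,neg_0)$. The sign of $\mu$ in the relevant open region must then be checked against~(\ref{Descartes3}) and Remark~\ref{SAPdefinesSP} to confirm consistency with the SP defined by $V$.
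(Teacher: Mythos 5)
Your strategy---realize the degree-$3$ tail by a cubic $Q$ with a single real root $\zeta$, then recover $P$ by two antidifferentiations with free constants $\lambda =a_1$ and $\mu =a_0$---is the same basic mechanism as the paper's proof, which takes $P''=x^3\mp 1$, integrates twice, and restores the prescribed $(pos_3,neg_3)$ by a final perturbation $\theta _1x^4+\theta _2x^3$; your variant avoids that perturbation by choosing $Q$ correctly at the outset, which is legitimate. Your analysis of the $\lambda$-regimes is also correct: the thresholds $\lambda =-\int_0^{\zeta}Q$ and $\lambda =0$ do cut out exactly the admissible values of $(pos_1,neg_1)$ with the right sign of $a_1$.

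The gap is in the $\mu$-step. When $P'_{\lambda}$ has two real roots $\delta _1<\delta _2$, the set of $\mu$ for which $P_{\mu}$ has three real roots is the interval $\left( \int_{\delta _1}^{0}P'_{\lambda}\, ,\, \int_{\delta _2}^{0}P'_{\lambda}\right)$, and the two admissible sign distributions of the three roots (e.g. $(0,3)$ versus $(1,2)$) correspond to $\mu >0$ and $\mu <0$; so you need this interval to contain $0$, i.e. $\int_{\delta _1}^{0}P'_{\lambda}<0<\int_{\delta _2}^{0}P'_{\lambda}$, and this is \emph{not} automatic. For instance, in the regime $(pos_1,neg_1)=(0,2)$ (so $\zeta <0$, $0<\lambda <-\int_0^{\zeta}Q$ and $\delta _1<\zeta <\delta _2<0$) one has $\int_{\delta _1}^{0}P'_{\lambda}=\int_{\delta _1}^{\delta _2}P'_{\lambda}+\int_{\delta _2}^{0}P'_{\lambda}$ with the first summand negative and the second positive; as $\lambda$ approaches the top of its regime the roots $\delta _1,\delta _2$ coalesce at $\zeta$, the negative summand tends to $0$ while the positive one does not, so the whole $\mu$-window lies in $\mu >0$ and only $(pos_0,neg_0)=(0,3)$ is attainable, not $(1,2)$. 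The symmetric problem occurs for $(pos_1,neg_1)=(2,0)$. Hence the assertion that ``each admissible $(pos_0,neg_0)$'' appears ``in an open $\mu$-region'' is false for a fixed generic $\lambda$; the missing step is to tune $\lambda$ as well, e.g. take $\lambda$ close to $0$ inside its regime so that the root of $P'_{\lambda}$ nearest the origin makes the corresponding endpoint $\int_{\delta}^{0}P'_{\lambda}$ small and of the right sign. With that addition the argument closes. (The paper sidesteps this issue by integrating $P'$ from one of its critical points and perturbing by $\pm \varepsilon$, which places the borderline double-root polynomial inside the three-root window by construction.)
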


The following lemma allows to construct examples of realizability of couples 
(SP, SAP) by deforming polynomials with multiple roots.

\begin{lm}\label{lm23}
Consider the polynomials $S:=(x+1)^3(x-a)^2$ and $T:=(x+a)^2(x-1)^3$, $a>0$. 
Their coefficients of $x^4$ are  
positive if and only if respectively $a<3/2$ and $a>3/2$. 
The coefficients of the 
polynomial $S$ define the SP

$$\begin{array}{lllc}
(+,+,+,+,-,+)&{\rm for}&a\in (~0~,~(3-\sqrt{6})/3~)&,\\ \\  
(+,+,+,-,-,+)&{\rm for}&a\in (~(3-\sqrt{6})/3~,~3-\sqrt{6}~)&,\\ \\ 
(+,+,-,-,-,+)&{\rm for}&a\in (~3-\sqrt{6}~,~2/3~)&{\rm and}\\ \\ 
(+,+,-,-,+,+)&{\rm for}&a\in (~2/3~,~3/2~)&.\end{array}$$
The coefficients of $T$ define the SP

$$\begin{array}{lllc}
(+,+,-,+,+,-)&{\rm for}&a\in (~3/2,~(3+\sqrt{6})/3~)&,\\ \\  
(+,+,-,-,+,-)&{\rm for}&a\in (~(3+\sqrt{6})/3~,~3+\sqrt{6}~)&{\rm and}\\ \\ 
(+,+,+,-,+,-)&{\rm for}&a>3+\sqrt{6}&.\end{array}$$
\end{lm}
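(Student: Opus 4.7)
The plan is to compute the coefficients of $S$ and $T$ explicitly as polynomials in $a$ and then to perform an elementary sign analysis of each coefficient on the relevant range. First I would expand $S(x)=(x+1)^3(x-a)^2$ by multiplying $(x^3+3x^2+3x+1)$ and $(x^2-2ax+a^2)$, obtaining
$$S(x)=x^5+(3-2a)x^4+(a^2-6a+3)x^3+(3a^2-6a+1)x^2+a(3a-2)x+a^2.$$
This already yields the first assertion for $S$: the coefficient of $x^4$ is positive iff $a<3/2$. The remaining coefficients are at most quadratic in $a$, and their real zeros are easy to read off: $a^2-6a+3$ vanishes at $3\pm\sqrt6$; $3a^2-6a+1$ vanishes at $(3\pm\sqrt6)/3$; $a(3a-2)$ vanishes at $0$ and $2/3$; the constant $a^2$ is strictly positive for $a>0$.

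Second, on $(0,3/2)$ I would arrange the relevant thresholds in increasing order, $(3-\sqrt6)/3<3-\sqrt6<2/3<3/2$, and determine the sign of each of the five coefficients in each of the four resulting subintervals. Reading off the five signs in each subinterval produces exactly the four sign patterns claimed for $S$, and one checks that across each transition exactly one coefficient changes sign, which is a useful sanity check.

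Third, for $T(x)=(x+a)^2(x-1)^3$ I would exploit the identity $T(x)=-S(-x)$ (with the same $a$), which follows from $(-x+1)^3(-x-a)^2=(1-x)^3(x+a)^2=-(x-1)^3(x+a)^2$. Hence the coefficient of $x^k$ in $T$ equals $(-1)^{k+1}$ times the corresponding coefficient of $S$; in particular the coefficient of $x^4$ in $T$ is $2a-3$, positive iff $a>3/2$. The relevant thresholds on $(3/2,\infty)$ are then $(3+\sqrt6)/3\approx1.816$ and $3+\sqrt6\approx5.449$, and the same interval-by-interval sign bookkeeping on the three subintervals yields the three claimed sign patterns.

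The argument is conceptually trivial, so the only real obstacle is clerical: one must correctly order the six threshold values $(3\pm\sqrt6)/3$, $3\pm\sqrt6$, $2/3$, $3/2$ on the real line, and then track the five coefficients through each subinterval without a bookkeeping slip. The identity $T(x)=-S(-x)$ saves having to re-expand $T$ from scratch and essentially reduces the sign analysis for $T$ to the one already done for $S$.
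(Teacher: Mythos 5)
Your proposal is correct and follows essentially the same route as the paper's proof: expand $S$, read off each coefficient as a polynomial in $a$ with roots $3/2$, $3\pm\sqrt6$, $(3\pm\sqrt6)/3$, $0$, $2/3$, order these thresholds on the real line, and track signs interval by interval. The identity $T(x)=-S(-x)$ is a small labour-saving twist (the paper simply tabulates the coefficients of $T$ directly, which are indeed the sign-alternated ones you predict), but it does not change the substance of the argument.
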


Finally, we make use of two more propositions to prove Theorem~\ref{maintm}:

\begin{prop}\label{mainprop4}
For $d=5$, all SAPs with $pos_1+neg_1=4$ and with the exception 
of the one defined by (\ref{exd5}) are realizable.
\end{prop}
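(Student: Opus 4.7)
The plan is to reduce the problem via Proposition~\ref{mainprop1} and then apply a case analysis. Since $pos_1+neg_1=4=\deg P'$, any realizing polynomial $P$ has $P'$ with only simple real roots. By Proposition~\ref{mainprop1} applied to $P'$, the sub-SAP $((pos_1,neg_1),\ldots,(pos_4,neg_4))$ is uniquely determined by $\sigma_1$ and is itself realizable. The problem therefore reduces to: for each sign pattern $\sigma_1$ (starting with $(+,+)$, by Observation~\ref{obs1}), each extension $\sigma_0\in\{\sigma_1\cup\{+\},\sigma_1\cup\{-\}\}$, and each admissible pair $(pos_0,neg_0)$, exhibit a monic quintic realizing the full SAP, except for the single SAP~(\ref{exd5}).

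The admissibility conditions (\ref{Descartes1})--(\ref{Rolle2}) force $pos_0+neg_0\in\{1,3,5\}$, and I treat the three subcases with three tools. \emph{(a)} When $pos_0+neg_0=5$, $(pos_0,neg_0)$ is the Descartes pair of $\sigma_0$ and Proposition~\ref{mainprop1} yields a realization whose derivative has $4$ simple real roots automatically; for $(pos_0,neg_0)\in\{(2,3),(3,2)\}$ Lemma~\ref{lm23} supplies alternative explicit families, namely small perturbations of $(x+1)^3(x-a)^2$ and $(x+a)^2(x-1)^3$. \emph{(b)} When $pos_0+neg_0=1$, Proposition~\ref{mainprop2} applied to a degree-$4$ realization of the sub-SAP produces the pairs $(0,1)$ or $(1,0)$ for the respective extensions of $\sigma_0$. \emph{(c)} When $pos_0+neg_0=3$, the admissible targets are among $(3,0),(0,3),(2,1),(1,2)$. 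Starting from a Sum~$5$ realization with critical values $v_1>v_2$ and $v_3>v_4$, I add a small $\varepsilon x$ to separate the $v_i$ (without affecting $\sigma_0$), then replace $P$ by $P-t$. As $t$ crosses a critical value $v_i$, two roots merge at the corresponding critical point and become complex; whether the two disappearing roots are both positive, both negative, or one of each is determined by the sign of that critical point. By choosing the Sum~$5$ realization appropriately --- exploiting the open set of realizations provided by Proposition~\ref{mainprop1} and the explicit families of Lemma~\ref{lm23} --- the needed ordering of the $v_i$ can be arranged to reach every admissible target pair. This is the mechanism of Remark~\ref{remshyp}.

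The sole obstruction is the couple (SP, AP) $((+,+,-,+,-,-),(3,0))$ listed in~(\ref{AlbouyFu}), whose non-realizability as a (SP, AP) immediately implies the non-realizability of the SAP~(\ref{exd5}) that extends it. The main technical point lies in case~(c): for each admissible target pair with $pos_0+neg_0=3$, one must verify that the ordering of the critical values in some Sum~$5$ realization is compatible with the shift yielding precisely that target (and not some other pair) while preserving $\sigma_0$ (i.e.\ keeping the constant term nonzero and of the correct sign throughout the shift). For the exceptional target $(3,0)$ with $\sigma_0=(+,+,-,+,-,-)$, this compatibility cannot be achieved, matching the (SP, AP) obstruction of~(\ref{AlbouyFu}); for every other admissible target the required ordering can be arranged, completing the proof.
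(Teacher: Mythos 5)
Your reduction is sound as far as it goes: since $pos_1+neg_1=4=\deg P'$, Proposition~\ref{mainprop1} does force the tail $((pos_1,neg_1),\ldots,(pos_4,neg_4))$ to be the Descartes SAP of $\sigma_1$, and your tools (a) and (b) correctly dispose of the cases $pos_0+neg_0=5$ and $pos_0+neg_0=1$, exactly as the paper does (via Propositions~\ref{mainprop1} and~\ref{mainprop2}). The gap is in your case (c), $pos_0+neg_0=3$, which is where essentially all of the content of the proposition lives. You assert that, by choosing a hyperbolic ($pos_0+neg_0=5$) realization ``appropriately'', the critical values can be ordered so that shifting by a constant kills exactly the desired pair of roots while the constant term keeps its sign. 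That is precisely the nontrivial point, and it is not established. Remark~\ref{remshyp} proves this mechanism only for the extreme APs $(d,0)$, $(0,d)$, $(d-1,1)$, $(1,d-1)$, where all the relevant critical points lie on one side of the origin, so the sign of $P(0)$ and the signs of the two disappearing roots are automatically controlled. For a mixed target such as $(pos_0,neg_0)=(0,3)$ or $(2,1)$ with, say, $\sigma_0=(+,+,+,-,-,+)$, one must exhibit a hyperbolic quintic with that sign pattern whose relevant local maximum value is smaller than the other critical values \emph{and} smaller than $P(0)$ -- otherwise a root crosses the origin (changing the sign pattern) before the intended pair collides. Nothing in Proposition~\ref{mainprop1} or Lemma~\ref{lm23} guarantees such an ordering for every admissible target; the paper does not argue this way, but instead produces explicit quintics ($G$, $H$, $K$, $L$, $N$, $D$, $\Lambda$, $\Xi$, $\Phi$, $U$, and the deformations $S_1$, $S_2$, $S_3$, $T_\varepsilon\pm\eta$, $S_\varepsilon$ of the Lemma~\ref{lm23} polynomials) for each of the many sum-three subcases. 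Your sentence ``for every other admissible target the required ordering can be arranged'' is exactly the statement that needs proof.

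A secondary remark: the non-realizability half is fine -- (\ref{exd5}) extends the first couple in (\ref{AlbouyFu}), so its non-realizability is immediate -- but your suggestion that this is ``matched'' by a failure of the critical-value ordering is not an argument and is not needed. Your structural observation that the tail of the SAP is forced to be the Descartes SAP of $\sigma_1$ is a genuinely cleaner organizing principle than the paper's case split by $(pos_1,neg_1)$, but it only reorganizes the bookkeeping; it does not supply the constructions.
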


\begin{prop}\label{mainprop5}
For $d=5$, all SAPs with $pos_1+neg_1=2$ and with the exception 
of the four SAPs defined by (\ref{exd45}) are realizable.
\end{prop}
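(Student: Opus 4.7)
The plan is to enumerate the finitely many admissible SAPs with $pos_1+neg_1=2$ and handle each one in turn. By the convention $\mathrm{sgn}(a_{d-1})=+$ we have $a_4>0$, so $P^{(4)}=120x+24a_4$ has its unique root at $-a_4/5<0$, forcing $(pos_4,neg_4)=(0,1)$ in every SAP under consideration. Rolle's theorem gives $pos_0+neg_0\leq 3$, hence $pos_0+neg_0\in\{1,3\}$ by parity; similarly $pos_2+neg_2\in\{1,3\}$ since $P''$ is of degree~$3$. Non-realizability of the four SAPs in~(\ref{exd45}) is immediate: each has $(pos_1,neg_1)=(2,0)$ and SP of the form $(+,+,-,+,+,\ast)$, so any realizing polynomial $P$ would yield a degree-$4$ polynomial $P'$ with SP $(+,+,-,+,+)$ and Descartes pair $(2,0)$, contradicting the Grabiner obstruction~(\ref{Grabiner}).

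For the realizability of the remaining SAPs I split by $pos_2+neg_2$. When $pos_2+neg_2=1$ realizability follows directly from Proposition~\ref{mainprop3}. When $pos_2+neg_2=3$ and $pos_0+neg_0=1$ (so $(pos_0,neg_0)\in\{(0,1),(1,0)\}$), the ``derivative SAP'' obtained by deleting the first pair is a degree-$4$ SAP which, by Theorem~\ref{maintm}(2) for $d=4$, is realizable unless it is the Grabiner exception~(\ref{exd4}); but that case corresponds exactly to the SAPs already excluded via~(\ref{exd45}). Proposition~\ref{mainprop2} applied to any such degree-$4$ realization then yields a degree-$5$ polynomial with the required SP and with $(pos_0,neg_0)=(0,1)$ or $(1,0)$ according as a $+$ or $-$ sign is appended.

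The remaining subcase $pos_2+neg_2=3$ with $pos_0+neg_0=3$ is the principal obstacle: here $P$ has three simple real roots while $P'$ has only two, so one of the three Rolle intervals between consecutive roots of $P$ must contain a complex-conjugate pair of critical points of $P$ rather than a real root of $P'$. To produce such polynomials I would use Lemma~\ref{lm23}, whose families $(x+1)^3(x-a)^2$ and $(x+a)^2(x-1)^3$ cover many of the sign patterns arising in this subcase for appropriate ranges of $a$; deforming by a small $\varepsilon x$ and by a small constant as in Remarks~\ref{remshyp} splits the triple and double roots into one simple real root and one complex conjugate pair each, producing the required SAP without disturbing the SP. The sign patterns not directly hit by Lemma~\ref{lm23} would be handled by prescribing $P''$ to have three simple real roots of the appropriate signs and choosing the two integration constants so that $P'$ retains exactly two real roots and $P$ acquires three, all with prescribed signs. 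The key technical difficulty is to verify, for each admissible SAP, that within this two-parameter family one can simultaneously attain the correct sign pattern and the prescribed numbers of positive and negative real roots of $P$ and~$P'$.
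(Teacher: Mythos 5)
Your reductions agree with the paper's: $(pos_4,neg_4)=(0,1)$ always; the four SAPs of (\ref{exd45}) fail because $P'$ would realize the first Grabiner couple in (\ref{Grabiner}); the subcase $pos_2+neg_2=1$ is Proposition~\ref{mainprop3}; and $pos_0+neg_0=1$ follows from Proposition~\ref{mainprop2} together with part (2) of Theorem~\ref{maintm} (legitimate, since that part is proved earlier in the paper's order). The problem is that the remaining subcase $pos_0+neg_0=3$, $pos_2+neg_2=3$ is not a residual technicality --- it is the entire content of the proposition, comprising roughly twenty distinct SAPs once one also branches on $(pos_3,neg_3)\in\{(0,2),(1,1)\}$ and on $(pos_1,neg_1)\in\{(0,2),(1,1),(2,0)\}$ (a branching your write-up never carries out). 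For this subcase you offer only a plan and explicitly defer ``the key technical difficulty,'' namely verifying that your two-parameter families actually attain each prescribed SAP. That verification is the proof; without it nothing is established. The paper settles this subcase by brute force: Cases A--D organized by $(pos_2,neg_2)\in\{(0,3),(1,2),(2,1)\}$, each resolved by an explicitly written quintic (e.g. $x^5+20x^4+40x^3+5x^2+x\pm 0.5$, $x^5+9x^4-0.8x^3\pm 0.0073x^2\pm 96x\pm 36$, etc.) whose roots and whose derivatives' roots are listed and checked.

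Two further concrete defects in the sketch. First, Lemma~\ref{lm23} is the wrong tool here: the paper uses it in the proof of Proposition~\ref{mainprop4} (where $pos_1+neg_1=4$), and the families $(x+1)^3(x-a)^2$, $(x+a)^2(x-1)^3$ have first derivatives with four real roots counted with multiplicity, so after a generic small perturbation one lands in the $pos_1+neg_1=4$ stratum, not $pos_1+neg_1=2$; to get $pos_1+neg_1=2$ you must force the double critical point at $-1$ (resp.\ $1$) to dissolve into a complex pair, which a perturbation by $\varepsilon x$ plus a constant does not obviously do while preserving the SP. Second, your description of the splitting is incoherent as stated: a double root of $P$ cannot split into ``one simple real root and one complex conjugate pair'' --- it splits into either two real roots or one complex pair --- and the arithmetic of your proposed splitting does not even yield $pos_0+neg_0=3$. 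So the proposal correctly frames the problem but proves only the parts that were already consequences of Propositions~\ref{mainprop1}--\ref{mainprop3}; the new content of Proposition~\ref{mainprop5} is missing.
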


We present all proofs in Section~\ref{secproofs} in the following order: 
we begin with the proof of part (1) of Theorem~\ref{maintm}. Then 
we prove Propositions~\ref{mainprop1} and \ref{mainprop2}, 
%and \ref{mainprop2bis}, 
then we give the proof of part (2) of Theorem~\ref{maintm}, after this the 
proofs of Proposition~\ref{mainprop3}, Lemma~\ref{lm23}, 
Proposition~\ref{mainprop4} and Proposition~\ref{mainprop5}, and we finish 
with the proofs of part (3) of Theorem~\ref{maintm} and of 
Proposition~\ref{A(d)}. In the proofs of 
Propositions~\ref{mainprop4} and \ref{mainprop5}, when a given case is 
realizable by a given polynomial, we list in a line the approximations 
of the real roots of the polynomial and its first three derivatives. The roots 
of one and the same derivative are separated by commas, between the roots 
of the different derivatives we put semicolons. We do not give the 
roots of the fourth derivatives which are always negative, see 
Observation~\ref{obs1} and part (1) of Remarks~\ref{remsobs1}. 
%In the examples which 
%we give, 
%we use often polynomials which are not monic in order to avoid too large 
%or too small coefficients.   
\vspace{1mm}

{\bf Acknowledgement.} The subject of the present paper is a direct 
continuation of the common work of the third author with Boris Shapiro and Jens 
Forsg{\aa}rd from the University of Stockholm on sign patterns and admissible 
pairs. The third author expresses his gratitude to the Universities of 
Stockholm and Carthage for their kind hospitality, and also to 
Groupement Euro-Maghr\'ebin de 
Math\'ematiques et leurs Int\'eractions of CNRS for partially supporting his 
stay at the University of Carthage.

\section{Proofs\protect\label{secproofs}}

\begin{proof}[Proof of part (1) of Theorem~\ref{maintm}]

For $d=1$, the only possible couple (SP, SAP) modulo the 
$\mathbb{Z}_2$-action and an example of a 
polynomial which realizes it is:

$$
((+,+), (0,1))~~~\, \, {\rm realizable~by}~~~\, \, x+1~~~.
$$
For $d=2$, there are three such couples (we list also the derivatives):

$$\begin{array}{cccccc}{\rm (SP,~SAP)}&&P&&P'\\ \\ 
((+,+,+), (0,2), (0,1))&&(x+1)(x+2)=x^2+3x+2&&2x+3&,\\ \\ 
((+,+,+), (0,0), (0,1))&&(x+1)^2+1=x^2+2x+2&&2x+2&{\rm and}\\ \\ 
((+,+,-), (1,1), (0,1)&&(x+2)(x-1)=x^2+x-2&&2x+1&.\end{array}$$
For $d=3$, there are $10$ such couples (we list them together with 
$P$, $P'$ and $P''$): 

$$\begin{array}{rcll}
%{\rm (SP,~SAP)}&P&P'&P''\\ \\ 
((+,+,+,+)&,& (0,3), (0,2), (0,1))&\\ \\ 
x^3+6x^2+11x+6=&&3x^2+12x+11=&6x+12=\\ 
(x+1)(x+2)(x+3)&&3(x+2+1/\sqrt{3})(x+2-1/\sqrt{3})&6(x+2)\end{array}$$ 
$$\begin{array}{rcll}
((+,+,+,+)&,& (0,1), (0,2), (0,1))&\\ \\ 
x^3+5x^2+8x+6=&&3x^2+10x+8=&6x+10=\\ 
(x+3)((x+1)^2+1)&&3(x+2)(x+4/3)&6(x+5/3)\end{array}$$
$$\begin{array}{rcll}
((+,+,+,+)&,& (0,1), (0,0), (0,1))&\\ \\ 
x^3+3x^2+13x+11=&&3x^2+6x+13=&6x+6=\\  
(x+1)((x+1)^2+10)&&3(x+1)^2+10&6(x+1)\end{array}$$ 
$$\begin{array}{rcll}
((+,+,+,-)&,& (1,2), (0,2), (0,1))&\\ \\ 
x^3+4x^2+x-6=&&3x^2+8x+1=&6x+8=\\ 
(x+3)(x+2)(x-1)&&3(x+\frac{4+\sqrt{13}}{3})(x+\frac{4-\sqrt{13}}{3})&
6(x+4/3)\end{array}$$
$$\begin{array}{rcll}
((+,+,+,-)&,& (1,0), (0,2), (0,1))&\\ \\ 
x^3+3x^2+x-5=&&3x^2+6x+1=&6x+6=\\ 
(x-1)((x+2)^2+1)&&3(x+1+\sqrt{2/3})(x+1-\sqrt{2/3})&6(x+1)\end{array}$$ 
$$\begin{array}{rcll}
((+,+,+,-)&,& (1,0), (0,0), (0,1))&\\ \\ 
x^3+3x^2+4x-8=&&3x^2+6x+4=&6x+6=\\ 
(x-1)((x+2)^2+4)&&3(x+1)^2+1&6(x+1)\end{array}$$ 
$$\begin{array}{rcll}
((+,+,-,+)&,& (2,1), (1,1), (0,1))&\\ \\ 
x^3+x^2-10x+8=&&3x^2+2x-10=&6x+2=\\ 
(x-1)(x-2)(x+4)&&3(x+\frac{1-\sqrt{31}}{3})(x+\frac{1+\sqrt{31}}{3})&
6(x+1/3)\end{array}$$ 
$$\begin{array}{rcll}
((+,+,-,+)&,& (0,1), (1,1), (0,1))&\\ \\ 
x^3+2x^2-6x+8=&&3x^2+4x-6=&6x+4=\\ 
((x-1)^2+1)(x+4)&&3(x+\frac{2-\sqrt{22}}{3})
(x+\frac{2+\sqrt{22}}{3})&6(x+2/3)\end{array}$$
$$\begin{array}{rcll}
((+,+,-,-)&,& (1,2), (1,1), (0,1))&\\ \\ 
x^3+x^2-4x-4=&&3x^2+2x-4=&6x+2=\\ 
(x-2)(x+1)(x+2)&&3(x+\frac{1-\sqrt{13}}{3})(x+\frac{1+\sqrt{13}}{3})&6(x+1/3)
\end{array}$$

$$\begin{array}{rcll}
((+,+,-,-)&,& (1,0), (1,1), (0,1))&\\ \\ 
x^3+x^2-0.5x-1.5=&&3x^2+2x-0.5=&6x+2=\\ 
(x-1)((x+1)^2+0.5)&&
3(x+\frac{1+\sqrt{2.5}}{3})(x+\frac{1-\sqrt{2.5}}{3})&6(x+1/3)
\end{array}$$
\end{proof}

\begin{proof}[Proof of Proposition~\ref{mainprop1}]
The condition $pos_0+neg_0=d$ implies that if a polynomial $P$ realizes 
a SAP with the given SP, then $pos_0=c$ and $neg_0=p$, i.e. the admissible pair 
$(pos_0,neg_0)$ is the Descartes' pair for the given SP. Next, one has 
$pos_1\geq pos_0-1$ and $neg_1\geq neg_0-1$, see (\ref{Rolle1}). As deg$P'=d-1$, 
this means that $pos_1+neg_1\geq d-2$, i.e. at least $d-2$ of the roots of the 
polynomial $P'$ are real. So the remaining one root is also real (hence 
$pos_1+neg_1=d-1$) and 
%there remains just one root of $P'$ not accounted for. Hence 
%it is real, 
its sign is defined by condition (\ref{Descartes3}). 
%and one has $pos_1+neg_1=d-1$. 
Continuing like this one proves uniqueness of the SAP satisfying the condition 
$pos_0+neg_0=d$. 

Now we show by induction on $d$ that any given SP is realizable with its 
Descartes' pair. For $d=1$ this is evident. 
Suppose that a sign pattern $\sigma$ of length $d+1$ is realizable with its 
Descartes' pair by a polynomial $P$. Denote by $\kappa$ the last component of 
$\sigma$ (hence $\kappa =+$ or $\kappa =-$). 
Consider the sign patterns $\sigma ^*$ 
and $\sigma ^{\dagger}$ defined in Proposition~\ref{mainprop2}. For 
$\varepsilon >0$ small enough, the polynomial $P(x)(x+\varepsilon )$ 
defines the sign pattern $\sigma ^*$ for $\kappa =+$ and $\sigma ^{\dagger}$ for 
$\kappa =-$, and vice versa for $P(x)(x-\varepsilon )$. Indeed, for 
$\varepsilon$ small enough, the 
coefficients of $x^{d+1}$, $x^d$, $\ldots$, $x$ of $P(x)(x\pm \varepsilon )$ 
have the same signs as the coefficients of 
$x^{d}$, $x^{d-1}$, $\ldots$, $1$ of $P$ (because the former equal $1$, 
$a_{d-1}\pm \varepsilon$, $a_{d-2}\pm \varepsilon a_{d-1}$, 
$\ldots$, $a_0\pm \varepsilon a_1$). The sign of the last coefficient 
equals $\pm \kappa$ in the case of $P(x)(x\pm \varepsilon )$. Thus one realizes 
the SPs $\sigma ^*$ and $\sigma ^{\dagger}$ of length $d+2$. 

\end{proof}

\begin{proof}[Proof of Proposition~\ref{mainprop2}]
Denote by $Q$ some polynomial such that $Q'=P$. Suppose that $d$ is even. 
Then for $A>0$ sufficiently large, the polynomial $Q+A$ (resp. 
$Q-A$) has a single real root which is simple and negative (resp. simple and 
positive), so $Q+A$ realizes the SAP 
$((0,1),~U)$ with the SP $\sigma ^*$ (resp. $Q-A$ realizes the SAP $((1,0),~U)$ 
with the SP $\sigma ^{\dagger}$). 

Suppose that $d$ is odd. 
Then for $A>0$ sufficiently large, the polynomial $Q+A$ has no real roots 
and realizes the SAP $((0,0),~U)$ with the SP $\sigma ^*$ 
(resp. the polynomial $Q-A$ has a single positive and a single negative root, 
both simple, so it realizes the SAP $((1,1),~U)$ 
with the SP $\sigma ^{\dagger}$.  
\end{proof}

\begin{proof}[Proof of part (2) of Theorem~\ref{maintm}]

We make use of Propositions~\ref{mainprop2} and \ref{mainprop1} 
and of Remarks~\ref{remshyp}. 
Hence when the admissible pair for $P$ is of the form $(1,1)$  
or $(0,0)$, then realizability of the SAP follows from 
Proposition~\ref{mainprop2}. When $pos_0+neg_0=4$, realizability follows from 
Proposition~\ref{mainprop1}. When the Descartes pair of the SP equals $(0,4)$ 
and $(pos_0, neg_0)=(0,2)$, realizability follows from  Remarks~\ref{remshyp}. 
We present the proof of realizability of the 
remaining cases by listing the SPs in the lexicographic order. In the proof 
$\varepsilon$ and $\eta$ denote positive and sufficiently small numbers. 
\vspace{1mm}

%{\bf 1.} $((+,+,+,+,+), (0,2), (0,3), (0,2), (0,1))$. 
%Consider any polynomial $P$ 
%having four distinct negative roots and different values $a<b<0$ 
%at its two local minima. The given SAP is realizable by any polynomial 
%$P-\alpha$ with $\alpha \in (a,b)$. 
%\vspace{1mm}

{\bf 1.} $((+,+,+,+,+), (0,2), (0,1), (0,2), (0,1))$. We set 
$P'':=(x+1)^2-\varepsilon$. 
%where $\varepsilon >0$ is small enough. 
Hence $P''$ has two negative roots and 
$P'''$ has a simple negative root. Set $P':=\int _{-2}^xP''(t)dt$. 
Hence $P'(0)>0$ 
and $P'$ has a single root which equals $-2$. Then we set 
$P:=\int _{-2-\eta}^xP'(t)dt$.
%, where $\eta >0$ is small enough. 
\vspace{1mm}

{\bf 2.} $((+,+,+,+,+), (0,2), (0,1), (0,0), (0,1))$. For $x\in [-3,-0.5]$, 
the graphs of the polynomial 
$P^{\ddagger}:=(x+1)(x+2)(1+\varepsilon x^2)$ and of its first and second 
derivatives are close to the graphs respectively of $(x+1)(x+2)=x^2+3x+2$, 
$2x+3$ and $2$. It is clear that $P^{\ddagger}$ has a complex conjugate pair 
of roots. As 

$$(P^{\ddagger})'=(2x+3)(1+\varepsilon x^2)+2\varepsilon x(x+1)(x+2)=
2x+3+2\varepsilon x(2x+1)(x+1)~,$$
for $\varepsilon >0$ small enough, the polynomial $(P^{\ddagger})'$ has a single 
real root which is close to $-3/2$, and 
$(P^{\ddagger})''=2(1+\varepsilon (6x^2+6x+1))$ has no real root. Obviously, 
$(P^{\ddagger})'''=\varepsilon (12x+6)$ has one negative root.  
\vspace{1mm}

{\bf 3.} $((+,+,+,-,+), (2,0), (1,2), (0,2), (0,1))$. One sets 

$$P':=(x-0.25)((x+1)^2-\varepsilon )=x^3+1.75x^2+0.5x-0.25+O(\varepsilon )~,$$
and then $P=\int _{0.25}^xP'(t)dt-\eta$. 
\vspace{1mm}
 
{\bf 4.} $((+,+,+,-,+), (2,0), (1,0), (0,2), (0,1))$. We set 
$P'':=(x+1)^2-\varepsilon$, 
$P':=\int _1^xP''(t)dt$ and $P:=\int _1^xP'(t)dt-\eta$. 
\vspace{1mm}

{\bf 5.} $((+,+,+,-,+), (2,0), (1,0), (0,0), (0,1))$. We set 
$P:=x^4-x+\varepsilon +\eta x^2+\eta ^2x^3$. Hence $P''=12x^2+6\eta ^2x+2\eta$  
has no real root and $P'''=24x+6\eta ^2$ has a negative root.
The polynomial $T:=x^4-x+\varepsilon$ has 
two positive roots and a complex conjugate pair, so for 
$0<\eta \ll \varepsilon$ this is also the case of $P$. As for $T'$, 
it has a single real root $1/4^{1/3}$, so $P'$ has a single real root close 
to $1/4^{1/3}$.   
\vspace{1mm}

{\bf 6.} $((+,+,+,-,+), (0,2), (1,2), (0,2), (0,1))$. Set 

$$P':=(x-0.5)(x+1)(x+3)=x^3+3.5x^2+x-1.5~.$$
One has $|\int _{-3}^{-1}P'(t)dt|>|\int _{-1}^{0.5}P'(t)dt|$, 
because the graph of $P'$ is symmetric w.r.t. the point $(-7/6,P'(-7/6))$ 
with $P'(-7/6)>0$. Hence $P$ has minima at $-3$ and $0.5$ and $P(-3)<P(0.5)$. 
Thus one can choose $a\in \mathbb{R}$ such that $P:=\int _0^xP'(t)dt+a$ 
two negative simple roots and no nonnegative root.
\vspace{1mm}

{\bf 7.} $((+,+,-,+,+), (0,2), (2,1), (1,1), (0,1))$. One sets 

$$P':=(x+3)((x-1)^2-\varepsilon )=x^3+x^2-5x+3+O(\varepsilon )~~~{\rm and}~~~
P:=\int _{-3-\eta}^xP'(t)dt~.$$

{\bf 8.} $((+,+,-,+,+), (0,2), (0,1), (1,1), (0,1))$. One sets 

$$P':=(x+1)((x-0.25)^2+\varepsilon )=x^3+0.5x^2-0.25x+0.0625+O(\varepsilon )~~~
{\rm and}~~~P:=\int _{-1}^xP'(t)dt-\eta ~.$$

{\bf 9.} $((+,+,-,-,+), (2,0), (1,2), (1,1), (0,1))$. One sets 

$$P':=(x-1.5)((x+1)^2-\varepsilon )=x^3+0.5x^2-2x-1.5+O(\varepsilon )~~~
{\rm and}~~~P:=\int _{1.5}^xP'(t)dt-\eta ~.$$ 

{\bf 10.} $((+,+,-,-,+), (2,0), (1,0), (1,1), (0,1))$. One sets

$$P':=(x-1)((x+1)^2+\varepsilon )=x^3+x^2-x-1+O(\varepsilon )~~~{\rm and}~~~
P:=\int _{1}^xP'(t)dt-\eta ~.$$

{\bf 11.} $((+,+,-,-,+), (0,2), (1,2), (1,1), (0,1))$. One sets

$$P':=(x-1)(x+2)(x+\varepsilon )=x^3+(1+\varepsilon )x^2+
(-2+\varepsilon )x-2\varepsilon ~.$$
Thus $|\int _{-2}^{-\varepsilon }P'(t)dt|>|\int _{-\varepsilon }^1P'(t)dt|$. 
One can choose $\eta$ such that for $P:=\int _{-2-\eta }^xP'(t)dt$ one has 
$P(0)>0$ and $P$ has two negative and no nonnegative root.

\end{proof}

\begin{proof}[Proof of Proposition~\ref{mainprop3}]
First of all we explicit the SAPs with $(pos_2, neg_2)=(1,0)$ or $(1,0)$. 
It is clear that when the SP $\sigma _0$ begins with two signs $+$, then 
for $((pos_3, neg_3)$, $(pos_4,neg_4))$ 
one has the three possibilities 

\begin{equation}\label{threepos}
((0,2),(0,1))~~~\, ,~~~\, ((1,1),(0,1))~~~\, {\rm and}~~~\, 
((0,0),(0,1))~.
\end{equation}
Proposition~\ref{mainprop2} allows not to consider the case 
$(pos_0, neg_0)=(0,1)$ or $(1,0)$ because then  
the couple (SP, SAP) is realizable. In particular, one needs not to consider 
the situation when $(pos_1, neg_1)=(0,0)$, because then $(pos_0, neg_0)=(0,1)$ 
or $(1,0)$, see (\ref{Rolle1}) and (\ref{Rolle2}). 
Therefore if $(pos_2, neg_2)=(1,0)$, 
then there exist the following four possible choices for 
$((pos_0, neg_0)$, $(pos_1, neg_1))$: 

\begin{equation}\label{fourpos1}
((3,0),(2,0))~~~\, ,~~~\, ((2,1),(2,0))~~~\, ,~~~\, 
((2,1),(1,1))~~~\, {\rm and}~~~\, ((1,2),(1,1))~.
\end{equation}
For $(pos_2, neg_2)=(0,1)$, the possibilities are also four:

\begin{equation}\label{fourpos2}
((0,3),(0,2))~~~\, ,~~~\, ((1,2),(0,2))~~~\, ,~~~\, 
((1,2),(1,1))~~~\, {\rm and}~~~\, ((2,1),(1,1))~.
\end{equation}
Combining the possibilities (\ref{threepos}) with each of the choices 
(\ref{fourpos1}) (resp. (\ref{fourpos2})) one obtains $12$ SAPs with 
$(pos_2, neg_2)=(1,0)$ and $12$ with $(pos_2, neg_2)=(0,1)$.

To realize a SAP with $(pos_2, neg_2)=(1,0)$  
we consider the polynomial 
$T:=x^3-1$ having a single real root $1$. 
If we choose $P''$ to equal $T$, 
and $P'$ to equal $x^4/4-x+0.1$, then $P'$ has two positive roots 
$\lambda _1:=0.10\ldots$ and $\lambda _2:=1.55\ldots$ 
and a complex conjugate pair. One can represent $P$ in the form 
$\int _{\lambda _1}^xP'(t)dt+\varepsilon$. For $\varepsilon =0$, it has a double 
root at $\lambda _1$, a simple one $>\lambda _1$ and a complex conjugate pair. 
Hence for $\varepsilon >0$ small enough, it has three positive simple roots 
and a conjugate pair. 

Finally we set $P:=\int _{\lambda _1}^xP'(t)dt+\varepsilon +
\theta _1x^4+\theta _2x^3$, where $\theta _j\in \mathbb{R}^*$ are small enough 
(much smaller than $\varepsilon$) 
and such that the polynomial $P'''$ realizes the necessary couple 
(\ref{threepos}). The sign pattern begins with two signs $+$, so one should 
have $\theta _1>0$. It is clear that $P$ realizes the SAP whose first three 
APs are $(3,0)$, $(2,0)$ and $(1,0)$. 

If one sets $P:=\int _{\lambda _2}^xP'(t)dt-\varepsilon +
\theta _1x^4+\theta _2x^3$, then the real roots of  
$P|_{\varepsilon =\theta _1=\theta _2=0}$ are $-0.96\ldots$ (simple) and $\lambda _2$ 
(double), so $P$ realizes the SAP whose first three 
APs are $(2,1)$, $(2,0)$ and $(1,0)$. 

If one sets $P'':=T$ and $P':=x^4/4-x-0.1$, then the real roots of $P'$ are 
$\mu _1:=-0.099\ldots$ and $\mu _2:=1.6\ldots$. If we set 
$P:=\int _{\mu _2}^xP'(t)dt-\varepsilon +
\theta _1x^4+\theta _2x^3$, then $P$ realizes the SAP whose first three 
APs are $(2,1)$, $(1,1)$ and $(1,0)$. If we set 
$P:=\int _{\mu _1}^xP'(t)dt+\varepsilon +
\theta _1x^4+\theta _2x^3$, then $P$ realizes the SAP whose first three 
APs are $(1,2)$, $(1,1)$ and $(1,0)$.

To realize a SAP with $(pos_2, neg_2)=(0,1)$  
we consider the polynomial 
$U:=x^3+1$ having a single real root $(-1)$. 
By analogy we set $P'':=U$ and obtain the polynomial $P'=x^4/4+x-0.1$ having 
roots $\nu _1:=-1.6\ldots =-\mu _2$ and $\nu _2:=0.09\ldots =-\mu _1$. Then 
$P:=\int _{\nu _1}^xP'(t)dt+\varepsilon +
\theta _1x^4+\theta _2x^3$ realizes the SAP whose first three 
APs are $(1,2)$, $(1,1)$ and $(0,1)$, and 
$P:=\int _{\nu _2}^xP'(t)dt-\varepsilon +
\theta _1x^4+\theta _2x^3$ realizes the SAP whose first three 
APs are $(2,1)$, $(1,1)$ and $(0,1)$. 

If we set $P'':=U$ and $P'=x^4/4+x+0.1$, then the roots of $P'$ equal 
$\rho _1:=-1.5\ldots =-\lambda _2$ and $\rho _2:=-0.1\ldots =-\lambda _1$. Thus 
$P:=\int _{\rho _1}^xP'(t)dt+\varepsilon +
\theta _1x^4+\theta _2x^3$ realizes the SAP whose first three 
APs are $(2,1)$, $(1,1)$ and $(0,1)$, and 
$P:=\int _{\rho _2}^xP'(t)dt-\varepsilon +
\theta _1x^4+\theta _2x^3$ realizes the SAP whose first three 
APs are $(0,3)$, $(0,2)$ and $(0,1)$.
\end{proof}

\begin{proof}[Proof of Lemma~\ref{lm23}]
The proof of the lemma is straightforward -- we list the coefficients 
of the polynomials $S$ and $T$ (without the leading one) and below them 
their roots. For the polynomial $S$, the list looks like this:

$$\begin{array}{ccccccccc}3-2a&,&3-6a+a^2&,&1-6a+3a^2&,&-2a+3a^2&,&a^2\\ \\ 
3/2&&3\pm \sqrt{6}&&(3\pm \sqrt{6})/3&&0~,~2/3&&0\end{array}$$
and one has the following order of these roots on the real line 
(we list the roots and their approximative values):

$$\begin{array}{ccccccccccccc}

0&<&\frac{3-\sqrt{6}}{3}&<&3-\sqrt{6}&<&\frac{2}{3}&<&\frac{3}{2}&<&
\frac{3+\sqrt{6}}{3}&<&3+\sqrt{6}~.\\ \\ 

&&0.18\ldots&&0.55\ldots&&0.66\ldots&&1.5&&1.81\ldots&&5.44\ldots 
\end{array}$$
For the polynomial $T$, we obtain the following list:

$$\begin{array}{ccccccccc}2a-3&,&3-6a+a^2&,&-1+6a-3a^2&,&-2a+3a^2&,&-a^2\\ \\ 
3/2&&3\pm \sqrt{6}&&(3\pm \sqrt{6})/3&&0~,~2/3&&0~.\end{array}$$
\end{proof}

\begin{proof}[Proof of Proposition~\ref{mainprop4}] 
We observe first that one cannot have $(pos_1, neg_1)=(4,0)$, because then 
the coefficient of $x^3$ in $P'$ (and hence the coefficient of $x^4$ in $P$) 
must be negative. Therefore we have to consider four cases.
\vspace{1mm}

{\em Case 1.} $(pos_1, neg_1)=(0,4)$. Hence $(pos_2, neg_2)=(0,3)$, 
$(pos_3, neg_3)=(0,2)$ and $(pos_4, neg_4)=(0,1)$. There are six possibilities 
for $(pos_0, neg_0)$, and their relizability results as follows: for 
$(0,5)$ and $(1,4)$ (resp. for $(0,3)$ and $(1,2)$ or for $(0,1)$ and $(1,0)$) 
from Proposition~\ref{mainprop1} (resp. from Remarks~\ref{remshyp}  
or Proposition~\ref{mainprop2}).
\vspace{1mm}

{\em Case 2.} $(pos_1, neg_1)=(1,3)$. Hence $pos_0=0$ or $1$, 
see (\ref{Rolle1}). By condition (\ref{Rolle1}), there are two possibilities:
\vspace{1mm}

{\em Case 2a.} $(pos_2, neg_2)=(0,3)$, $(pos_3, neg_3)=(0,2)$ and 
$(pos_4, neg_4)=(0,1)$. There are seven possible values of 
$(pos_0, neg_0)$. For five of them we find out that: 

{\em i)} $(2,3)$ and $(1,4)$ are realizable by Proposition~\ref{mainprop1}; 

{\em ii)} $(1,2)$ is realizable by Remarks~\ref{remshyp}; 

{\em iii)} $(0,1)$ and $(1,0)$ are realizable by Proposition~\ref{mainprop2}. 

To deal with the sixth possibility 
$(pos_0, neg_0)=(0,3)$ we use Lemma~\ref{lm23}. Consider the polynomial 
$S$ with $a\in (0,(3-\sqrt{6})/3)$, and its deformation 
$S_1:=S+\varepsilon (x^2+x)$, where $\varepsilon >0$ is sufficiently small. 
The polynomial $S_1$ has a root at $-1$ at which the first derivative 
is negative. 
Hence to the left and right of this root there are two more negative roots 
(because $S_1(0)=a^2>0$). On the other hand $S_1$ has no positive roots 
(because for $x>0$, one has $S(x)\geq 0$ and $x^2+x>0$). The roots of $S_1$ are 
close to the roots of $S$, so $S_1$ has a complex conjugate pair close to $a$ 
and realizes the sixth possibility. 

The last of the seven possibilities for $(pos_0, neg_0)$ is $(2,1)$. 
We consider again the polynomial 
$S$ with $a\in (0,(3-\sqrt{6})/3)$. Hence $S_2:=S-\varepsilon$ has two real 
positive roots close to $a$ and a simple negative root close to $-1$. For 
$0<\eta \ll \varepsilon$, the polynomial $S_3:=S_2-\eta x$ has two real 
positive roots close to $a$ and a simple negative root close to $-1$; its 
derivative has two simple roots close to $-1$ and a simple root close to $a$. 
The fourth root of $S_2'$ must also be real, and as the constant term 
of $S_2'$ is negative, this root must be negative. Thus the seventh possibility 
is realizable by the polynomial $S_3$.    
 
{\em Case 2b.} $(pos_2, neg_2)=(1,2)$, $(pos_3, neg_3)=(0,2)$ and 
$(pos_4, neg_4)=(0,1)$ or $(pos_2, neg_2)=(1,2)$, $(pos_3, neg_3)=(1,1)$ and 
$(pos_4, neg_4)=(0,1)$ (we consider the two possibilities together). 
The pair $(pos_0, neg_0)$ can take the following values: 

{\em i)} $(1,4)$ or $(2,3)$ -- the cases are realizable 
by Proposition~\ref{mainprop1};

{\em ii)} $(1,2)$ -- the case is realizable by Remarks~\ref{remshyp}; 

{\em iii)} $(0,1)$ or $(1,0)$ -- the cases are realizable 
by Proposition~\ref{mainprop2};  

{\em iv)} $(0,3)$ -- for $(pos_3, neg_3)=(0,2)$, the case is realizable by the 
polynomial

$$\begin{array}{ccl}
G&:=&(x+1.01)(x+1)(x+0.99)((x-0.3)^2+0.01)\\ \\ 
&=&x^5+2.40x^4+1.2999x^3-0.50004x^2-0.299950x+0.099990\end{array}$$

$$\begin{array}{cll}
{\rm roots:}&-1.01 ~,~ -1 ~,~ -0.99 ~;& -1.0\ldots ~,~ -0.9\ldots ~,~ 
-0.2\ldots ~,~ 
0.2\ldots ~;\\  
&-1.0\ldots ~,~ -0.5\ldots ~,~ 0.09\ldots ~;& -0.7\ldots ~,~ -0.1\ldots ~;
\end{array}$$

%the real roots of whose derivatives equal
%
%$$\begin{array}{llllllll}
%G'&-1.005\ldots&-0.99\ldots&-0.208\ldots&0.28\ldots&
%G'''&-0.79\ldots&-0.16\ldots\\ \\ 
%G''&-1.00\ldots&-0.53\ldots&0.09\ldots&&G^{(4)}&-0.48\ldots&;\end{array}$$
for $(pos_3, neg_3)=(1,1)$, the case is realizable by the 
polynomial

$$\begin{array}{ccl}
H&:=&(x+1.01)(x+1)(x+0.99)((x-0.6)^2+0.01)\\ \\ 
&=&x^5+1.80x^4-0.2301x^3-1.48998x^2-0.089917x+0.369963\end{array}$$

$$\begin{array}{cll}
{\rm roots:}&-1.01 ~,~ -1 ~,~ -0.99 ~;& 
-1.0\ldots ~,~ -0.9\ldots ~,~ -0.03\ldots ~,~ 0.5\ldots ~;\\ 
&-1.0\ldots ~,~ -0.4\ldots ~,~ 
0.3\ldots ~;&-0.7\ldots ~,~ 0.03\ldots ~.\end{array}$$

%with real roots of its derivatives listed below:
%
%$$\begin{array}{llllllll}
%H'&-1.005\ldots&-0.99\ldots&-0.03\ldots&
%0.59\ldots&H'''&-0.75\ldots&0.03\ldots\\ \\ 
%H''&-1.00\ldots&-0.42\ldots&0.34\ldots&&H^{(4)}&-0.36\ldots&.\end{array}$$

{\em v)} $(2,1)$ -- for $(pos_3, neg_3)=(0,2)$, the case is realizable by the 
polynomial $K:=x^5+20x^4+0.6x^3-5x^2-x+0.5$ 

$$\begin{array}{cll}
{\rm roots:}&-19.9\ldots ~,~ 0.2\ldots ~,~ 0.4\ldots ~;&-15.9\ldots ~,~ 
-0.30\ldots ~,~ 
-0.10\ldots ~,~ 0.38\ldots ~;\\ &-11.9\ldots ~,~ -0.2\ldots ~,~ 0.1\ldots ~;&
-7.9\ldots ~,~ -0.007\ldots ~;\end{array}$$

%whose roots and the roots of whose derivatives equal
%
%$$\begin{array}{cllllcll}
%K&-19.9\ldots&0.26\ldots&0.47\ldots&&K'''&-7.99\ldots&-0.007\ldots \\ \\ 
%K'&-15.9\ldots&-0.30\ldots&-0.10\ldots&0.38\ldots&K^{(4)}&-4& \\ \\ 
%K''&-11.9\ldots&-0.21\ldots&0.19\ldots&&;&&\end{array}$$  
for $(pos_3, neg_3)=(1,1)$, the case is realizable by the polynomial 

$$\begin{array}{ccl}
L&:=&((x+1.01)(x+1)(x+0.99)+0.1)((x-0.6)^2-0.01)\\ \\ 
&=&x^5+1.80x^4-0.2501x^3-1.44998x^2-0.269915x+0.384965\end{array}$$

$$\begin{array}{cll}
{\rm roots:}&-1.4\ldots ~,~ 0.5\ldots ~,~ 0.7\ldots ~;&
-1.1\ldots ~,~ -0.76\ldots ~,~-0.09\ldots ~,~ 0.6\ldots ~;\\ 
&-1.0\ldots ~,~ -0.4\ldots ~,~ 0.3\ldots ~;&-0.7\ldots ~,~ 0.03\ldots ~.
\end{array}$$
%
%whose roots and the roots of whose derivatives are given below:
%
%$$\begin{array}{cllllcll}
%L&-1.46\ldots&0.50\ldots&0.70\ldots&&L'''&-0.75\ldots&0.03\ldots \\ \\ 
%L'&-1.18\ldots&-0.76\ldots&-0.09\ldots&0.60\ldots&L^{(4)}&-0.36\ldots& \\ \\ 
%L''&-1.01\ldots&-0.41\ldots&0.34\ldots&&&.& \end{array}$$

{\em Case 3.} $(pos_1, neg_1)=(2,2)$. There are two cases to consider:

{\em Case 3a.} $(pos_2, neg_2)=(2,1)$, $(pos_3, neg_3)=(1,1)$ and 
$(pos_4, neg_4)=(0,1)$. (One cannot have $(pos_3, neg_3)=(2,0)$, because in this 
case the coefficient of $x$ in $P'''$ hence the one of $x^4$ in $P$ 
must be negative.) There are eight possible values of $(pos_0, neg_0)$:

{\em i)} $(3,2)$ or $(2,3)$ -- realizability follows from 
Proposition~\ref{mainprop1};

{\em ii)} $(0,1)$ or $(1,0)$ -- realizability results from 
Proposition~\ref{mainprop2}; 

{\em iii)} $(3,0)$ or $(1,2)$ -- realizability is deduced 
from Lemma~\ref{lm23} as follows. Consider for some fixed 
$a\in (3/2, (3+\sqrt{6})/3)$ the polynomial $T$ and its deformation 

$$T_{\varepsilon}:=(x-1)(x-1-\varepsilon )(x-1+\varepsilon )(x+a)^2~~~,~~~
0<\varepsilon \ll 1~.$$
It has two critical values attained for some $x\in (1-\varepsilon ,1)$ and 
for some $x\in (1,1+\varepsilon )$. These values are $O(\varepsilon )$. Hence 
one can choose $\varepsilon$ and $\eta >0$ small enough so that the polynomial 
$T_{\varepsilon}+\eta$ (resp. $T_{\varepsilon}-\eta$) realizes the SAP with 
$(pos_0, neg_0)=(3,0)$ (resp. with $(pos_0, neg_0)=(1,2)$). 

{\em iv)} $(2,1)$ -- we realize the SAP by the polynomial 

$$N:=x^5+2x^4-60x^3+0.05x^2+x+5~.$$ 

$$\begin{array}{cll}
{\rm roots:}&-8.8\ldots ~,~ 0.4\ldots ~,~ 6.8\ldots ~;&-6.8\ldots ~,~ 
-0.07\ldots ~,~ 0.07\ldots ~,~ 5.2\ldots ~;\\ 
&-4.8\ldots ~,~ 0.0002\ldots ~,~ 3.6\ldots ~;&-2.8\ldots ~,~ 2.0\ldots ~.
\end{array}$$ 
%The roots of $N^{(k)}$, $k\leq 4$, are equal to:
%
%$$\begin{array}{cllllcll}
%N&-8.81\ldots&0.45\ldots&6.80\ldots&&N'''&-2.88\ldots&2.08\ldots \\ \\ 
%N'&-6.85\ldots&-0.07\ldots&0.07\ldots&5.25\ldots&N^{(4)}&-0.4& \\ \\ 
%N''&-4.88\ldots&0.0002\ldots&3.68\ldots&&.&&\end{array}$$ 

{\em v)} $(0,3)$ -- we realize the SAP by the polynomial 

$$D:=x^5+0.01x^4-1.9990x^3+0.059990x^2+0.99940005x+0.0000019999$$

$$\begin{array}{cll}
{\rm roots:}&-1.1\ldots ~,~ -0.8\ldots ~,~ -0.000002\ldots ~;&
-1.0\ldots ~,~ -0.4\ldots ~,~ 0.4\ldots ~,~ 0.9\ldots ~;\\ 
&-0.7\ldots ~,~ 0.01\ldots ~,~ 0.7\ldots ~;&-0.4\ldots ~,~ 0.4\ldots ~.
\end{array}$$

%with roots of $D^{(k)}$, $k\leq 4$, given below:
%
%$$\begin{array}{cllllcll}
%D&-1.13\ldots&-0.87\ldots&-0.000002\ldots&&D'''&& \\ \\ 
%D'&-1.01\ldots&-0.43\ldots&0.46\ldots&0.97\ldots&D^{(4)}&& \\ \\ 
%D''&-0.78\ldots&0.01\ldots&0.76\ldots&&.&& \end{array}$$

{\em Case 3b.} $(pos_2, neg_2)=(1,2)$, $(pos_3, neg_3)=(0,2)$ and 
$(pos_4, neg_4)=(0,1)$ or $(pos_2, neg_2)=(1,2)$, $(pos_3, neg_3)=(1,1)$ and 
$(pos_4, neg_4)=(0,1)$ (we consider the two possibilities in parallel). 
There are seven possible values for $(pos_0, neg_0)$, the same as in Case 3a. 

{\em i)} For $(3,2)$, $(2,3)$, $(0,1)$ and $(1,0)$, the answers 
why these cases are realizable are the same as in Case 3a. 

{\em ii)} For $(3,0)$ and   
$(1,2)$, we use Lemma~\ref{lm23}. Consider the polynomial $T$ with 
$a>3+\sqrt{6}$ (for $(pos_3, neg_3)=(0,2)$) or $a\in ((3+\sqrt{6})/3$, 
$3+\sqrt{6})$ (for $(pos_3, neg_3)=(1,1)$). The cases are realizable 
by the polynomials $T_{\varepsilon}\pm \eta$ as in Case 3a. 

{\em iii)} For $(2,1)$, and when $(pos_3, neg_3)=(1,1)$, the case is 
realizable by the polynomial 

$$\Lambda :=x^5+0.2x^4-6x^3-0.05x^2+0.01x+0.5~.$$

$$\begin{array}{cll}
{\rm roots:}&-2.5\ldots ~,~ 0.4\ldots ~,~ 2.3\ldots ~;&
-1.9\ldots ~,~ -0.02\ldots ~,~ 0.02\ldots ~,~ 1.8\ldots ~;\\ &
-1.4\ldots ~,~ -0.002\ldots ~,~ 1.2\ldots ~;&-0.81\ldots ~,~ 0.73\ldots ~.
\end{array}$$

%the roots of $\Lambda$ and its derivatives are:
%
%$$\begin{array}{cllllcll}
%\Lambda&-2.5\ldots&0.44\ldots&2.34\ldots&&\Lambda '''&-0.81\ldots&
%0.73\ldots \\ \\ 
%\Lambda '&-1.9\ldots&-0.02\ldots&0.02\ldots&1.82\ldots&\Lambda ^{(4)}&
%-0.04& \\ \\ 
%\Lambda ''&-1.4\ldots&-0.002\ldots&1.28\ldots&&.&& \end{array}$$
For $(2,1)$, and when $(pos_3, neg_3)=(0,2)$, 
we realize the case by the polynomial 

%$$\Xi :=x^5+1.65x^4+0.320x^3-0.2840x^2+0.03470x+0.002475~.$$
$$\Xi :=x^5+2.25x^4+1.0166666666x^3-0.45x^2+0.025x+0.0015~.$$

$$\begin{array}{cll}
{\rm roots:}&-0.03\ldots ~,~ 0.13\ldots ~,~ 0.18\ldots ~;& 
-1.0\ldots ~,~ -0.9\ldots ~,~ 0.03\ldots ~,~ 0.1\ldots ~;\\ &
-0.9\ldots ~,~ -0.4\ldots ~,~ 0.09\ldots ~;&-0.7\ldots ~,~ -0.1\ldots ~.
\end{array}$$

%$$\begin{array}{cll}
%{\rm roots:}&-1.1\ldots ~,~ -0.9\ldots ~,~ -0.05\ldots ~;& 
%-1.0\ldots ~,~ -0.5\ldots ~,~ 0.07\ldots ~,~ 0.1\ldots ~;\\ &
%-0.8\ldots ~,~ -0.2\ldots ~,~ 0.1\ldots ~;&-0.6\ldots ~,~ -0.05\ldots ~.
%\end{array}$$ 

%whose roots and the roots of whose derivatives equal
%
%$$\begin{array}{cllllcll}
%\Xi&-1.1\ldots&-0.90\ldots&-0.050\ldots&&\Xi '''&
%-0.60\ldots&-0.052\ldots \\ \\% 
%\Xi '&-1.01\ldots&-0.54\ldots&0.07\ldots&0.16\ldots&\Xi ^{(4)}&
%-0.33\ldots& \\ \\ 
%\Xi ''&-0.83\ldots&-0.27\ldots&0.12\ldots&&.&& \end{array}$$

{\em iv)} For $(0,3)$, and when $(pos_3, neg_3)=(1,1)$, we realize the case 
by a deformation of the polynomial 
$S$ from Lemma~\ref{lm23} with $a\in (2/3, 3/2)$, namely 

$$S_{\varepsilon}:=(x+1-\varepsilon )(x+1)(x+1+\varepsilon )
((x-a)^2+\varepsilon )~~~,~~~
0<\varepsilon \ll 1~.$$
For $(0,3)$,  and when $(pos_3, neg_3)=(0,2)$, we realize the case by the 
polynomial 

$$\Phi :=x^5+2.4x^4+0.481x^3-0.8510x^2+0.08529x+0.01729~.$$ 

$$\begin{array}{cll}
{\rm roots:}&-1.9 ~,~ -1 ~,~ -0.1~;&-1.6\ldots ~,~ -0.6\ldots ~,~ 
0.05\ldots ~,~ 
0.2\ldots ~;\\ &-1.2\ldots ~,~ -0.3\ldots ~,~ 0.1\ldots ~;&
-0.9\ldots ~,~ -0.05\ldots ~.\end{array}$$
%We list the roots of $\Phi ^{(k)}$ for $k\leq 4$:

%$$\begin{array}{cllllcll}
%\Phi&-1.9&-1&-0.1&&\Phi '''&-0.90\ldots&-0.05\ldots \\ \\ 
%\Phi '&-1.60\ldots&-0.66\ldots&0.05\ldots&0.29\ldots&\Phi ^{(4)}&-0.48& \\ \\ 
%\Phi ''&-1.27\ldots&-0.35\ldots&0.18\ldots&&.&& \end{array}$$

{\em Case 4.}  $(pos_1, neg_1)=(3,1)$. Hence the SP is of the form 
$(+,+,-,+,-,\pm )$, because the SP defined by $P'$ must have three sign 
changes. Thus $(pos_2, neg_2)=(2,1)$, 
$(pos_3, neg_3)=(1,1)$ and $(pos_4, neg_4)=(0,1)$. There are seven 
possibilities for $(pos_0, neg_0)$ out of which 
$(4,1)$ and $(3,2)$ (resp. $(2,1)$) are realizable by 
Proposition~\ref{mainprop1} (resp. by Remarks~\ref{remshyp}) while 
the realizability of $(0,1)$ and $(1,0)$ results from 
Proposition~\ref{mainprop2}. We realize the case $(pos_0, neg_0)=(1,2)$ by 
the polynomial 
$$U:=x^5+x^4-9.01x^3+10.97x^2-4.05x-0.01~.$$

$$\begin{array}{cll}
{\rm roots:}&-4.0\ldots ~,~ -0.002\ldots ~,~ 1.2\ldots ~;&
-3.0\ldots ~,~ 0.2\ldots ~,~ 0.8\ldots ~,~ 1.0\ldots ~;\\ 
&-2.1\ldots ~,~ 0.5\ldots ~,~ 1.0\ldots ~;&-1.1\ldots ~,~ 0.7\ldots~.
\end{array}$$
%We list the real roots of $U$ and its derivatives:
%
%$$\begin{array}{cllllcll}
%U&-4.00\ldots&-0.002\ldots&1.28\ldots &&U'''&-1.17\ldots&0.77\ldots\\ \\ 
%U'&-3.06\ldots&0.27\ldots&0.89\ldots&1.09\ldots&U^{(4)}&-0.2&\\ \\ 
%U''&-2.11\ldots&0.51\ldots&1.003\ldots &.&&&\end{array}$$
The case $(pos_0, neg_0)=(3,0)$ is not realizable, 
see (\ref{exd5}) in Example~\ref{exnotrealiz}.

\end{proof}

\begin{proof}[Proof of Proposition~\ref{mainprop5}]
We are considering neither the cases with 
$(pos_0, neg_0)=(0,1)$ or $(1,0)$ (which 
have been treated by Proposition~\ref{mainprop2}) nor the ones with 
$pos_0+neg_0=5$ (see Proposition~\ref{mainprop1}) nor the ones with 
$(pos_2, neg_2)=(0,1)$ or $(1,0)$ 
(which have been settled by Proposition~\ref{mainprop3}). Therefore we are 
going to limit ourselves to the situations in which $pos_0+neg_0=3$ 
and $pos_2+neg_2=3$. It is impossible to have $(pos_2, neg_2)=(3,0)$, because 
this would mean that the coefficient of $x^2$ in $P''$ (hence the one of $x^4$ 
in $P$) must be negative. So 
three cases have to be examined (defined by $(pos_2, neg_2)$):
\vspace{1mm}

{\em Case A.}  $(pos_2, neg_2)=(0,3)$. Hence $(pos_3, neg_3)=(0,2)$ and 
$(pos_4, neg_4)=(0,1)$. Observe first that one cannot have 
$(pos_1, neg_1)=(2,0)$, because then $P''$ should have at least one positive 
root. Therefore $(pos_1, neg_1)=(0,2)$ or $(1,1)$. 
For $(pos_1, neg_1)=(0,2)$, we realize the cases $(pos_0, neg_0)=(0,3)$ and 
$(pos_0, neg_0)=(1,2)$ by the 
polynomials $\tilde{P}$ and $P_*$ respectively: 

$$
\tilde{P}~:=~x^5+20x^4+40x^3+5x^2+x+0.5$$

$$\begin{array}{cll}
{\rm roots:}&-17.7\ldots ~,~ -2.1\ldots ~,~ -0.2\ldots ~;&-14.3\ldots ~,~ 
-1.5\ldots ~;\\ &-10.9\ldots ~,~ -1.0\ldots ~,~ -0.04\ldots ~;&
-7.4\ldots ~,~ -0.5\ldots ~;\end{array}$$

$$P_*~:=~x^5+20x^4+40x^3+5x^2+x-0.5~.$$
For $k\geq 1$, the roots of $P_*^{(k)}$ and $\tilde{P}^{(k)}$ are the same due to 
$\tilde{P}-P_*\equiv 1$. The roots of $P_*$ equal $-17.7\ldots$, 
$-2.1\ldots$ and~$0.1\ldots$.

%we list the roots of $\tilde{P}^{(k)}$, $k\leq 4$, and 
%of $P_*$ (one has $\tilde{P}-P_*\equiv 0.1$):

%$$\begin{array}{cllllcll}
%\tilde{P}&-17.7\ldots&-2.1\ldots&-0.2\ldots&&
%\tilde{P}'''&-7.4\ldots&-0.5\ldots \\ \\ 
%\tilde{P}'&-14.3\ldots&-1.5\ldots&&&\tilde{P}^{(4)}&-4& \\ \\ 
%\tilde{P}''&-10.9\ldots&-1.05\ldots&-0.04\ldots&&&& \\ \\ 
%P_*&-17.7\ldots&-2.1\ldots&0.16\ldots&&.&&
%\end{array}$$
For $(pos_1, neg_1)=(1,1)$, we realize the cases $(pos_0, neg_0)=(2,1)$ 
and $(pos_0, neg_0)=(1,2)$ 
by the polynomials $\tilde{Q}$ and $Q_*$: 
%see the list of roots of $\tilde{Q}^{(k)}$ and $Q_*^{(k)}$ below:

$$\tilde{Q}~:=~x^5+100x^4+20x^3+0.5x^2-x+0.005$$

$$\begin{array}{cll}
{\rm roots:}&-99.7\ldots ~,~ 0.005\ldots ~,~ 0.1\ldots ~;&-79.8\ldots ~,~ 
0.09\ldots ~;\\ &-59.8\ldots ~,~ -0.09\ldots ~,~ -0.009\ldots ~;&
-39.9\ldots ~,~ -0.05\ldots ~;\end{array}$$

$$Q_*~:=~x^5+30x^4+20x^3+5x^2-x-0.5$$

$$\begin{array}{cll}
{\rm roots:}&-29.3\ldots ~,~ -0.3\ldots ~,~ 0.2\ldots ~;&-23.4\ldots ~,~ 
0.06\ldots ~;\\ &-17.6\ldots ~,~ -0.18\ldots ~,~ -0.15\ldots ~;&
-11.8\ldots ~,~ -0.1\ldots ~.\end{array}$$
\vspace{1mm}

%$$\begin{array}{cllllcll}
%\tilde{Q}&-99.7\ldots&0.005\ldots&0.15\ldots&&\tilde{Q}'''&
%-39.9\ldots&-0.05\ldots \\ \\ 
%\tilde{Q}'&-79.8\ldots&0.09\ldots&&&\tilde{Q}^{(4)}&-20& \\ \\ 
%\tilde{Q}''&-59.8\ldots&-0.09\ldots&-0.009\ldots&&&& \\ \\ 
%Q_*&-29.3\ldots&-0.3\ldots&0.2\ldots&&Q_*'''&-11.8\ldots&-0.1\ldots \\ \\ 
%Q_*'&-23.4\ldots&0.06\ldots&&&Q_*^{(4)}&-6& \\ \\ 
%Q_*''&-17.6\ldots&-0.18\ldots&-0.15\ldots&&.&& \end{array}$$

{\em Case B.}  $(pos_2, neg_2)=(1,2)$, $(pos_3, neg_3)=(1,1)$ and 
$(pos_4, neg_4)=(0,1)$. 

{\em Case B1.} $(pos_1, neg_1)=(0,2)$. We realize the case 
$(pos_0, neg_0)=(0,3)$ by the polynomial 

$$J_{\sharp}~:=~x^5+9x^4-0.8x^3-0.0073x^2+96x+36$$

$$\begin{array}{cll}
{\rm roots:}&-8.9\ldots ~,~-2.2\ldots ~,~-0.3\ldots ~;&-7.2\ldots ~,~
-1.4\ldots ~;\\ &-5.4\ldots ~,~-0.002\ldots ~,~0.04\ldots ~;&-3.6\ldots ~,~
0.02\ldots ~.\end{array}$$
%$$\begin{array}{cllllcll}
%J_{\sharp}&-8.9\ldots&-2.2\ldots&-0.3\ldots&&J_{\sharp}'''&-3.6\ldots&
%0.02\ldots \\ \\ 
%J_{\sharp}'&-7.2\ldots&-1.4\ldots&&&J_{\sharp}^{(4)}&-1.8& \\ \\ 
%J_{\sharp}''&-5.4\ldots&-0.002\ldots&0.04\ldots&&.&& \end{array}$$
We realize the case 
$(pos_0, neg_0)=(1,2)$ by the polynomial 
$$V_{\flat}~:=~x^5+9x^4-0.8x^3-0.0073x^2+96x-36$$

$$\begin{array}{cll}
{\rm roots:}&-8.9\ldots ~,~-2.5\ldots ~,~0.3\ldots ~;&-7.2\ldots ~,~
-1.4\ldots ~;\\ &-5.4\ldots ~,~-0.002\ldots ~,~0.04\ldots ~;&-3.6\ldots ~,~
0.02\ldots ~.\end{array}$$
%$$\begin{array}{cllllcll}
%V_{\flat}&-8.9\ldots&-2.5\ldots&0.3\ldots&&V_{\flat}'''&-3.6\ldots&
%0.02\ldots \\ \\ 
%V_{\flat}'&-7.2\ldots&-1.4\ldots&&&V_{\flat}^{(4)}&-1.8& \\ \\ 
%V_{\flat}''&-5.4\ldots&-0.002\ldots&0.04\ldots&&.&& \end{array}$$

{\em Case B2.} $(pos_1, neg_1)=(1,1)$.
We realize the case $(pos_0, neg_0)=(2,1)$ by the polynomial 
$$P_{\sharp}~:=~x^5+0.2x^4-6x^3-0.05x^2-0.1x+0.05$$ 
%(list of roots of $P_{\sharp}^{(k)}$ 
%follows):
$$\begin{array}{cll}
{\rm roots:}&-2.5\ldots ~,~0.1\ldots ~,~2.3\ldots ~;&-1.9\ldots ~,~
1.8\ldots ~;\\ &-1.4\ldots ~,~-0.002\ldots ~,~1.2\ldots ~;&
-0.8\ldots ~,~0.7\ldots ~.\end{array}$$
%$$\begin{array}{cllllcll}
%P_{\sharp}&-2.5\ldots&0.1\ldots&2.3\ldots&&P_{\sharp}'''&-0.8\ldots&
%0.7\ldots \\ \\ 
%P_{\sharp}'&-1.9\ldots&1.8\ldots&&&P_{\sharp}^{(4)}&-0.04& \\ \\ 
%P_{\sharp}''&-1.4\ldots&-0.002\ldots&1.2\ldots&&.&& \end{array}$$
We realize the case $(pos_0, neg_0)=(1,2)$ by the polynomial 

$$P_{\circ}~:=~x^5+0.2x^4-6x^3-0.05x^2-0.1x-0.05$$ 
%(list of roots of $P_{\circ}^{(k)}$ follows):
$$\begin{array}{cll}
{\rm roots:}&-2.5\ldots ~,~-0.1\ldots ~,~2.3\ldots ~;&-1.9\ldots ~,~1.8\ldots 
~;\\ &-1.4\ldots ~,~-0.002\ldots ~,~1.2\ldots ~;&-0.8\ldots ~,~0.7\ldots ~.
\end{array}$$
%$$\begin{array}{cllllcll}
%P_{\circ}&-2.5\ldots&-0.1\ldots&2.3\ldots&&P_{\circ}'''&
%-0.8\ldots&0.7\ldots \\ \\ 
%P_{\circ}'&-1.9\ldots&1.8\ldots&&&P_{\circ}^{(4)}&-0.04& \\ \\ 
%P_{\circ}''&-1.4\ldots&-0.002\ldots&1.2\ldots&&.&& \end{array}$$

{\em Case B3.} $(pos_1, neg_1)=(2,0)$.
To realize the case $(pos_0, neg_0)=(3,0)$ we consider the polynomial 
$$W_{\flat}~:=~x^5+4.4x^4-19.295x^2+13.22x-1.1295$$ 
%We list the roots of $W_{\flat}^{(k)}$:
$$\begin{array}{cll}
{\rm roots:}&0.1~,~0.6\ldots ~,~1.3\ldots ~;&0.3\ldots ~,~1.0\ldots ~;\\ &
-2.2\ldots ~,~-1.1\ldots ~,~0.7\ldots ~;&-1.7\ldots ~,~0~.\end{array}$$
%$$\begin{array}{cllllcll}
%W_{\flat}&0.1&0.6\ldots&1.3\ldots&&W_{\flat}'''&-1.7\ldots&0 \\ \\ 
%W_{\flat}'&0.3\ldots&1.0\ldots&&&W_{\flat}^{(4)}&-0.88& \\ \\ 
%W_{\flat}''&-2.2\ldots&-1.1\ldots&0.7\ldots&&.&& \end{array}$$
As we see, all real roots of $W_{\flat}^{(k)}$, $k\leq 4$, are simple. Hence for 
$\varepsilon >0$ sufficiently close to $0$, the polynomial 
$W_{\flat}-\varepsilon x^3$ realizes this case.  

To realize the case $(pos_0, neg_0)=(2,1)$ we construct first the polynomial 
$$W_{\sharp}~:=~x^5+4.6x^4-17.495x^2+8.74x+1.0485$$

$$\begin{array}{cll}
{\rm roots:}&-0.1~,~0.6\ldots ~,~1.3\ldots ~;&0.2\ldots ~,~1.0\ldots ~;\\ &
-2.4\ldots ~,~-0.9\ldots ~,~0.7\ldots ~;&-1.84~,~0~.\end{array}$$
%$$\begin{array}{cllllcll}
%W_{\sharp}&-0.1&0.6\ldots&1.3\ldots&&W_{\sharp}'''&-1.84&0 \\ \\ 
%W_{\sharp}'&0.2\ldots&1.0\ldots&&&W_{\sharp}^{(4)}&-0.92& \\ \\ 
%W_{\sharp}''&-2.4\ldots&-0.9\ldots&0.7\ldots&&.&& \end{array}$$
We realize the case by the polynomial $W_{\sharp}-\varepsilon x^3$. 
\vspace{1mm}

{\em Case C.} $(pos_2, neg_2)=(1,2)$, $(pos_3, neg_3)=(0,2)$ and 
$(pos_4, neg_4)=(0,1)$. 

{\em Case C1.} $(pos _1, neg_1)=(0,2)$. 
We realize the case $(pos_0, neg_0)=(0,3)$ by the polynomial 

$$P_{\flat}~:=~x^5+9x^4+3x^3-0.73x^2+96x+36$$
$$\begin{array}{cll}
{\rm roots:}&-8.4\ldots ~,~-2.5\ldots ~,~-0.3\ldots ~;&
-6.8\ldots ~,~-1.6\ldots ~;\\ &-5.2\ldots ~,~-0.2\ldots ~,~0.05\ldots ~;&
-3.5\ldots ~,~-0.08\ldots ~.\end{array}$$
%$$\begin{array}{cllllcll}
%P_{\flat}&-8.4\ldots&-2.5\ldots&-0.3\ldots&&P_{\flat}'''&-3.5\ldots&
%-0.08\ldots \\ \\ 
%P_{\flat}'&-6.8\ldots&-1.6\ldots&&&P_{\flat}^{(4)}&-1.8& \\ \\ 
%P_{\flat}''&-5.2\ldots&-0.2\ldots&0.05\ldots&&.&& \end{array}$$
We realize the case $(pos_0, neg_0)=(1,2)$ by the polynomial 
$$T_{\flat}~:=~x^5+20x^4+80x^3-0.02x^2+x-0.5$$

$$\begin{array}{cll}
{\rm roots:}&-14.4\ldots ~,~-5.5\ldots ~,~0.1\ldots ~;&-11.9\ldots ~,~
-4.0\ldots ~;\\ &-9.4\ldots ~,~-2.5\ldots ~,~0.00008\ldots ~;&-6.8\ldots ~,~-1.1\ldots ~.
\end{array}$$
%$$\begin{array}{cllllcll}
%T_{\flat}&-14.4\ldots&-5.5\ldots&0.1\ldots&&T_{\flat}'''&
%-6.8\ldots&-1.1\ldots \\ \\ 
%T_{\flat}'&-11.9\ldots&-4.0\ldots&&&T_{\flat}^{(4)}&-4& \\ \\ 
%T_{\flat}''&-9.4\ldots&-2.5\ldots&0.000008\ldots&&.&& \end{array}$$

{\em Case C2.} $(pos _1, neg_1)=(1,1)$. 
We realize the case $(pos_0, neg_0)=(2,1)$ by the polynomial 
$$S_{\flat}~:=~x^5+9x^4+3x^3-0.73x^2-96x+36$$
$$\begin{array}{cll}
{\rm roots:}&-8.7\ldots ~,~0.3\ldots ~,~1.8\ldots ~;&-6.9\ldots ~,~1.2\ldots ~;
\\ &-5.2\ldots ~,~-0.2\ldots ~,~0.05\ldots ~;&-3.5\ldots ~,~-0.08\ldots ~.
\end{array}$$
%$$\begin{array}{cllllcll}
%S_{\flat}&-8.7\ldots&0.3\ldots\ldots&1.8\ldots\ldots&&S_{\flat}'''&-3.5\ldots&
%-0.08\ldots \\ \\ 
%S_{\flat}'&-6.9\ldots&1.2\ldots&&&S_{\flat}^{(4)}&-1.8& \\ \\ 
%S_{\flat}''&-5.2\ldots&-0.2\ldots&0.05\ldots&&.&& \end{array}$$
We realize the case $(pos_0, neg_0)=(1,2)$ by the polynomial 

$$U_{\flat}~:=~x^5+20x^4+0.06x^3-0.05x^2-x-0.5$$
$$\begin{array}{cll}
{\rm roots:}&-19.9\ldots ~,~-0.3\ldots ~,~0.4\ldots ~;&-15.9\ldots ~,~
0.2\ldots ~;\\ &-11.9\ldots ~,~-0.02\ldots ~,~0.01\ldots ~;&-7.9\ldots ~,~
-0.0007\ldots ~.\end{array}$$
%$$\begin{array}{cllllcll}
%U_{\flat}&-19.9\ldots&-0.3\ldots\ldots&0.4\ldots\ldots&&U_{\flat}'''
%&-7.9\ldots&
%-0.0007\ldots \\ \\ 
%U_{\flat}'&-15.9\ldots&0.2\ldots&&&U_{\flat}^{(4)}&-4& \\ \\ 
%U_{\flat}''&-11.9\ldots&-0.02\ldots&0.01\ldots&&.&& \end{array}$$

{\em Case C3.} $(pos _1, neg_1)=(2,0)$. We realize the case 
$(pos _0, neg_0)=(3,0)$ by the polynomial $W_{\flat}+\varepsilon x^3$, 
and the case $(pos _0, neg_0)=(2,1)$ by the polynomial 
$W_{\sharp}+\varepsilon x^3$, with $W_{\flat}$ and $W_{\sharp}$ as defined in Case~B3.
One cannot have $(pos _0, neg_0)=(1,2)$ or $(0,3)$, see~(\ref{Rolle1}).
\vspace{1mm}

{\em Case D.} $(pos_2, neg_2)=(2,1)$. One cannot have $(pos_3, neg_3)=(2,0)$, 
because then the coefficient of $x$ in $P'''$ (hence the one of $x^4$ in $P$) 
should be negative. Therefore $(pos_3, neg_3)=(1,1)$ and 
$(pos_4, neg_4)=(0,1)$. The possibility $(pos_1, neg_1)=(2,0)$ 
has not to be considered -- it gives rise to the four SAPs (\ref{exd45}). 
So we have to treat two possibilities; 

{\em Case D1.} $(pos_1, neg_1)=(1,1)$. Hence $(pos_0, neg_0)=(1,2)$ or $(2,1)$, 
see (\ref{Rolle1}). We realize the case 
$(pos_0, neg_0)=(1,2)$ by the polynomial 

$$P_{\dagger}~:=~x^5+0.2x^4-6x^3+0.05x^2-0.01x-0.5$$ 
%(list of roots of $P_{\dagger}^{(k)}$ follows):

$$\begin{array}{cll}
{\rm roots:}&-2.5\ldots ~,~ -0.4\ldots ~,~ 2.3\ldots ~;&-1.9\ldots ~,~ 
1.8\ldots ~;\\ 
&-1.4\ldots ~,~ 0.002\ldots ~,~1.2\ldots ~;&-0.8\ldots ~,~ 0.7\ldots ~.
\end{array}$$
%$$\begin{array}{cllllcll}
%P_{\dagger}&-2.5\ldots&-0.4\ldots&2.3\ldots&&P_{\dagger}'''&
%-0.8\ldots&0.7\ldots \\ \\ 
%P_{\dagger}'&-1.9\ldots&1.8\ldots&&&P_{\dagger}^{(4)}&-0.04\ldots& \\ \\ 
%P_{\dagger}''&-1.4\ldots&0.002\ldots&1.2\ldots&&.&& \end{array}$$
We realize the case $(pos_0, neg_0)=(2,1)$ by the polynomial 

$$K_{\flat}~:=~x^5+9x^4-0.8x^3+0.0073x^2-96x+36$$

$$\begin{array}{cll}
{\rm roots:}&-9.2\ldots ~,~ 0.3\ldots ~,~ 1.9\ldots ~;&-7.3\ldots ~,~ 
1.3\ldots ~;\\ 
&-5.4\ldots ~,~ 0.003\ldots ~,~ 0.04\ldots ~;&-3.6\ldots ~,~ 
0.02\ldots ~.\end{array}$$

%$$\begin{array}{cllllcll}
%K_{\flat}&-9.2\ldots&0.3\ldots&1.9\ldots&&K_{\flat}'''&-3.6\ldots&
%0.02\ldots \\ \\ 
%K_{\flat}'&-7.3\ldots&1.3\ldots&&&K_{\flat}^{(4)}&-1.8& \\ \\ 
%K_{\flat}''&-5.4\ldots&0.003\ldots&0.04\ldots&&.&& \end{array}$$

{\em Case D2.} $(pos_1, neg_1)=(0,2)$. Hence $(pos_0, neg_0)=(0,3)$ or $(1,2)$, 
see (\ref{Rolle1}). We realize the case 
$(pos_0, neg_0)=(0,3)$ by the polynomial 
$$J_{\flat}~:=~x^5+9x^4-0.8x^3+0.0073x^2+96x+36$$

$$\begin{array}{cll}
{\rm roots:}&-8.9\ldots ~,~-2.2\ldots ~,~-0.3\ldots ~;&
-7.2\ldots ~,~-1.4\ldots ~;\\ &-5.4\ldots ~,~0.003\ldots ~,~0.04\ldots ~;&
-3.6\ldots ~,~0.02\ldots ~.\end{array}$$
%$$\begin{array}{cllllcll}
%J_{\flat}&-8.9\ldots&-2.2\ldots&-0.3\ldots&&J_{\flat}'''&-3.6\ldots&
%0.02\ldots \\ \\ 
%J_{\flat}'&-7.2\ldots&-1.4\ldots&&&J_{\flat}^{(4)}&-1.8& \\ \\ 
%J_{\flat}''&-5.4\ldots&0.003\ldots&0.04\ldots&&.&& \end{array}$$
We realize the case $(pos_0, neg_0)=(1,2)$ by the polynomial 
$$K_{\sharp}~:=~x^5+9x^4-0.8x^3+0.0073x^2+96x-36$$

$$\begin{array}{cll}
{\rm roots:}&-8.9\ldots ~,~-2.5\ldots ~,~0.3\ldots ~;&-7.2\ldots ~,~
-1.4\ldots ~;\\ &-5.4\ldots ~,~0.003\ldots ~,~0.04\ldots ~;&-3.6\ldots ~,~
0.02\ldots ~.\end{array}$$
%$$\begin{array}{cllllcll}
%K_{\sharp}&-8.9\ldots&-2.5\ldots&0.3\ldots&&K_{\sharp}'''&-3.6\ldots&
%0.02\ldots \\ \\ 
%K_{\sharp}'&-7.2\ldots&-1.4\ldots&&&K_{\sharp}^{(4)}&-1.8& \\ \\ 
%K_{\sharp}''&-5.4\ldots&0.003\ldots&0.04\ldots&&.&& \end{array}$$

\end{proof}

\begin{proof}[Proof of part (3) of Theorem~\ref{maintm}]
There are three possible values for the sum $pos_1+neg_1$, namely, 
$0$, $2$ and $4$. If $pos_1+neg_1=0$, then $pos_0+neg_0=1$ (see (\ref{Rolle1})) 
and the realizability of such a case results from Proposition~\ref{mainprop3}. 
If $pos_1+neg_1=2$ or $4$, then realizability follows from 
Proposition~\ref{mainprop4} or~\ref{mainprop5}.
\end{proof}

\begin{proof}[Proof of Proposition~\ref{A(d)}]
For $d=2$ and $3$ the proposition is to be checked straightforwardly. Suppose 
that $d\geq 4$. Denote by $h_{d,m}$ the number of SAPs with 
$(pos_0, neg_0)=(0,m)$. Set 
$h_{d,m}:=0$ for $m>d$. Hence $h_{d,d}=1$ and 
$$h_{d,m}=\left\{ \begin{array}{ll}h_{d,m+2}&{\rm if~}d{\rm ~is~even~and~} m=0\\ 
h_{d,m+2}+h_{d-1,m-1}&{\rm in~all~other~cases}\end{array}\right. ~.$$
This can be deduced from conditions (\ref{Rolle1}) and (\ref{Rolle2}). 
Thus if $d$ is even, 
then one deduces from the above formulas that  

$$h_{d,2}=h_{d,0}=h_{d-1,d-1}+h_{d-1,d-3}+\cdots +h_{d-1,1}=A(d-1)~,$$
and as $h_{d,d}=1>0$, one obtains $A(d)>2A(d-1)$. If $d$ is odd, then 

$$\begin{array}{lc}h_{d,3}=h_{d-1,d-1}+h_{d-1,d-3}+\cdots +h_{d-1,2}&
{\rm and}\\ h_{d,1}=h_{d-1,d-1}+h_{d-1,d-3}+\cdots +h_{d-1,2}+h_{d-1,0}=A(d-1)&.
\end{array}$$
As $d-1$ is even, one has $h_{d-1,2}=h_{d-1,0}$, so $h_{d,3}>A(d-1)/2$ and 
$A(d)>h_{d,3}+h_{d,1}>3A(d-1)/2$.
\end{proof}

\end{document}